\documentclass[11pt, a4paper]{amsart}

\usepackage[utf8]{inputenc} 

\newtheorem{theorem}{Theorem}[section]
\newtheorem{corollary}[theorem]{Corollary}
\newtheorem{main}{Main Theorem}
\newtheorem{lemma}[theorem]{Lemma}

\newtheorem{proposition}[theorem]{Proposition}

\theoremstyle{definition}
\newtheorem{definition}[theorem]{Definition}
\newtheorem{remark}[theorem]{Remark}
\newtheorem{example}{Example}

\usepackage{graphicx}

\usepackage{physics} 

\title{Ergodicity of dynamical systems without uniqueness of orbits}
\author{Tomoharu Suda}
\date{\today}
\address{Department of Applied Mathematics, 
Tokyo University of Science}
\email{tomoharu.suda@rs.tus.ac.jp}

\begin{document}
\begin{abstract}
Recently, there has been considerable interest in the study of non-deterministic dynamical systems. To analyze the chaotic behavior of such systems from a measure-theoretic viewpoint, it is desirable to consider ergodicity. However, the classical definition of ergodicity involves invariant sets, whose definition is not unique for non-deterministic dynamical systems.  Thus, we are led to the question of which invariance yields an interesting definition of ergodicity. Here, we propose a definition based on the strong backward invariance and show that analogs of classical results hold. We also consider implications of the Birkhoff ergodic theorem for systems without uniqueness of trajectories.
\end{abstract}
\maketitle

\section{Introduction}
Classical dynamical systems theory concerns systems where the evolution of the state depends uniquely on the current state and the uniqueness of orbits holds. However, in applications, there are important examples of systems where this property does not hold, such as mechanical systems with dry friction, ecological models with prey switching, or epidemic models with interventions \cite{bernardo2008piecewise, carvalho2020sliding, carvalho2021global}. As one of the main interests of dynamical systems theory is to study the phenomenon of chaos, where complex behavior emerges from simple systems, the chaotic behavior of such systems without uniqueness of orbits has been considered as well, and examples of such systems are known \cite{buzzi2016chaotic, ANTUNES202352}. To illustrate chaotic behavior observable in systems without uniqueness of trajectories, here we present a simple example given by piecewise continuous vector fields.

\begin{example}
Consider a piecewise continuous vector field $X$ defined on the half disk $D^+ := \{(x,y) \mid x^2+y^2 \leq 0, y \geq 0\}$:
\begin{equation}\label{eqn_hd}
 X(x,y) = \begin{cases}
    (y,-x) & (y>0)\\
   (-1,0) &(y=0)
   \end{cases}
\end{equation}
Solutions or trajectories of this system can be defined by gluing the partial trajectories (Figure \ref{ex_hd}). 
\end{example}
\begin{figure}[htbp]
  \centering
  \includegraphics[width=\linewidth,
    clip ]{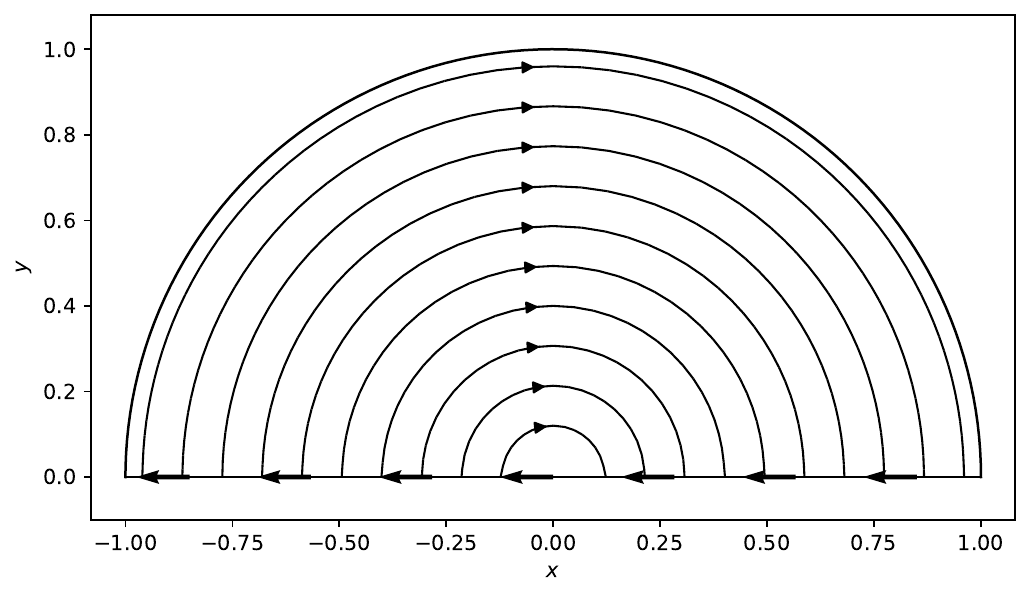}
  \caption{The half-disk system.}
  \label{ex_hd}
\end{figure}

In particular, the theory has been developed considerably on the topological aspects, and natural generalizations of the classical theory have been obtained \cite{buzzi2016chaotic, ANTUNES202352}. Based on these results, for example, it is easy to show that the system (\ref{eqn_hd}) is chaotic in the sense as defined there. Namely, it is topologically transitive and has sensitive dependence on initial points. Further, its periodic orbits are dense.

On the other hand, in the study of classical dynamical systems, the measure-theoretic approach is an essential tool for the analysis of chaotic behavior \cite{walters2000introduction, brin2002introduction}. Its generalizations to systems without uniqueness have been considered \cite{miller1999invariant, moussa2017invariant, novaes, benaim2000ergodic, faure2013ergodic}. Yet a notable obstruction is ergodicity, which plays a central role in the classical setting. Its definition involves an invariant set and an invariant measure, but the former has two flavors in systems without uniqueness. Thus, we are naturally led to the question of how to generalize it.

The purpose of the present article is to consider a generalization of the notion of ergodicity so that analogs of classical results can be obtained for the generalized dynamical systems.
Here, we construct a theory based on the axiomatic theory of ordinary differential equations first described by Yorke and expanded by the author \cite{yorke1969spaces, suda2022equivalence, suda2023equivalence, SUDA20241}. This theory enables us to obtain a concise description of generalized dynamical systems, and a theory of invariant measures has been developed, so it provides us with a framework to consider measure-theoretic aspects \cite{SUDA20241}. Also, we note that this approach is akin to the orbit space theory in spirit \cite{orbitspace}. 

To address non-uniqueness, we define ergodicity in terms of strong invariance. Namely, an invariant measure is said to be ergodic if it assigns measure 0 or 1 to strongly backward invariant subsets (Definition \ref{def_ergodicity}). Under this definition, the system (\ref{eqn_hd}) clearly has an ergodic invariant measure. 

Further, we have generalizations of the classical consequences of ergodicity. For example, we have the following result on the topological property of the support of ergodic invariant measures (cf. Proposition 4.2.2 in \cite{brin2002introduction}).
\begin{main}
If $S$ satisfies the switching axiom and $\mu$ is ergodic, $\mu$-almost every point $x$ has a forward orbit dense in the support of $\mu$. 
\end{main}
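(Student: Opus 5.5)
The plan mirrors the classical argument (Proposition 4.2.2 in \cite{brin2002introduction}): use a countable base of the support and show that the points whose forward orbit misses a given basic open set form a $\mu$-null set. Work in the separable metric phase space $X$ of the generalized dynamical system $S$, and fix a countable base $\{U_n\}$ of $X$. Keep only those $U_n$ with $U_n \cap \operatorname{supp}\mu \neq \emptyset$; for these, $\mu(U_n)>0$ by the definition of the support. Here ``forward orbit of $x$'' means the union of the forward parts of all trajectories through $x$.

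For each such $n$, let $A_n$ be the set of points $x$ whose forward orbit meets $U_n$. Set $B_n := X\setminus A_n$, so $B_n$ consists of the points all of whose forward trajectories avoid $U_n$.

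The main step is to show that $B_n$ (or a measurable set differing from it by a null set) is strongly backward invariant in the sense of Definition~\ref{def_ergodicity}. The natural candidate is $A_n$ itself: every point lying on a trajectory ending at a point of $A_n$ can reach $U_n$ by following that trajectory and then continuing. Concatenating trajectories in this way is exactly where the switching axiom enters, since it guarantees that the glued curve is again a trajectory of $S$. So $A_n$ is strongly backward invariant, which should suffice for the ergodicity definition. If the definition instead requires the complement, I would check directly that $B_n$ is backward invariant using the same switching argument.

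I expect two obstacles. The first is measurability of $A_n$. Since $A_n = \bigcup_{t\ge 0}$ of the set of points admitting a trajectory in $U_n$ at time $t$, it is a projection of an open set in the space of trajectories. I would try to show that $A_n$ is open, using the continuity and compactness properties of the axiomatic solution space (the Yorke-type axioms). Failing that, I would show it is analytic and hence $\mu$-measurable. The second is the precise form of invariance demanded by the definition, which I would settle in the same switching step.

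Given measurability and invariance, ergodicity gives $\mu(A_n)\in\{0,1\}$. Since $U_n\subset A_n$ (take $t=0$) and $\mu(U_n)>0$, we get $\mu(A_n)=1$. Then $G:=\operatorname{supp}\mu\cap\bigcap_n A_n$ has full measure. Every $x\in G$ has forward orbit meeting every basic open set that meets the support, so its forward orbit is dense in $\operatorname{supp}\mu$; here the support is forward invariant, or at least the closure of the orbit contains the support.
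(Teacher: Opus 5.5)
There is a genuine gap: your argument is correct, but it proves a weaker statement than the theorem. You define the forward orbit of $x$ as $\bigcup_{t\ge 0}V_S(t)\{x\}$, the union of all forward trajectories through $x$. The theorem says that $x$ \emph{has a} forward orbit dense in $\mathrm{supp}\,\mu$. That means there is one $\phi\in S$ with $\phi(0)=x$ and $\phi([0,\infty))$ dense in the support, which is why the paper's proof does not stop after its first paragraph.

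Your $A_n$ is exactly the paper's $W_n=\bigcup_{t\ge 0}V_S(t)^{-1}U_n$. So $x\in\bigcap_n A_n$ only says that for each $n$ some trajectory from $x$, depending on $n$, enters $U_n$. These trajectories cannot simply be glued with the switching axiom. The trajectory entering $U_1$ does so at one specific point $y$, and nothing forces $y\in A_2$: ergodicity only controls $\mu$-almost every point, and the points of $U_1$ reachable from $x$ may form a $\mu$-null set.

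The missing idea is to run your invariance/ergodicity argument on sets that already encode the later visits.
\begin{itemize}
\item Since $\mu(A_2)=1$, we have $\mu(U_1\cap A_2)=\mu(U_1)>0$.
\item The set $\bigcup_{t\ge 0}V_S(t)^{-1}(U_1\cap A_2)$ is backward strongly invariant by the same switching argument you give, hence has full measure.
\item A point of this set reaches some $y\in U_1\cap A_2$, and switching at $y$ gives a single trajectory visiting $U_1$ and then $U_2$.
\item Nesting this, with $U_{n-1}\cap A_n$ innermost, shows that for every $n$ almost every point has one trajectory visiting $U_1,\dots,U_n$ in turn.
\end{itemize}
These are the paper's chain sets (its $A_n$, not to be confused with yours).

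Even then, one must pass from these finite chains to a single trajectory visiting every $U_n$, and the trajectory realizing the chain depends on $n$. The paper takes this step by intersecting over $n$; it deserves careful checking.

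On measurability: $A_n$ need not be open, since $\pi$ need not be an open map. However, $A_n$ is the $\pi$-image of the open, hence $\sigma$-compact, set $\{\phi\in S : \phi(t)\in U_n \text{ for some } t\ge 0\}$ in the compact metrizable space $S$. So $A_n$ is $F_\sigma$, and measurability is not an obstacle.
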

Notably, the Birkhoff ergodic theorem can be applied to a trajectory-wise description of the system (see Corollary \ref{cor_birkhoff} in Section \ref{section_birkhoff}). Still, it does not generalize precisely to a set-valued semigroup defined by trajectories, even under an assumption stronger than mere ergodicity.
\begin{main}
Let $S \subset C(\mathbb{R}, X)$ be a compact shift-invariant subset. Let $\mu$ be a finely ergodic invariant measure with an ergodic shift invariant measure $\nu$ such that $\mu = (\pi)_*\nu$. Then, for all measurable subsets $A$ and $B$ of $X$,
\[
 \liminf_{t\to \infty} \frac{1}{t} \int_{0}^t \mu\qty(V_S(s)^{-1}A \cap B) \mathrm{d}s \geq \mu(A) \mu(B).
\]
The inequality can be strict.
\end{main}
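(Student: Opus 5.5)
We will reduce the statement to the classical mixing-in-mean characterization of ergodicity for the shift flow $(\sigma_s)$ on $S$ with the ergodic measure $\nu$, and then push the result down to $X$ through $\pi$. Here $\pi(\varphi)=\varphi(0)$ and $V_S(s)x=\{\varphi(s)\mid \varphi\in S,\ \varphi(0)=x\}$. The key observation is that the preimage $V_S(s)^{-1}A$, meaning the set of $x$ with $V_S(s)x\cap A\neq\emptyset$, contains the projection of trajectories that land in $A$ at time $s$. Precisely,
\[
\pi^{-1}\qty(V_S(s)^{-1}A\cap B)\supseteq \sigma_s^{-1}\pi^{-1}A\cap \pi^{-1}B ,
\]
since a trajectory $\varphi$ with $\varphi(0)\in B$ and $\varphi(s)\in A$ witnesses $\varphi(0)\in V_S(s)^{-1}A$. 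Using $\mu=\pi_*\nu$ we then get
\[
\mu\qty(V_S(s)^{-1}A\cap B)\geq \nu\qty(\sigma_s^{-1}\pi^{-1}A\cap \pi^{-1}B).
\]
Measurability of $V_S(s)^{-1}A$ should follow from compactness of $S$: for closed $A$ the set is the projection of a compact set, and for general $A$ it is analytic, hence $\mu$-measurable. We would cite whatever measurability lemma the paper established earlier, or else work with the completion of $\mu$.

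Next we apply the Birkhoff ergodic theorem (Corollary \ref{cor_birkhoff}) to the ergodic flow $(S,\sigma,\nu)$ with $f=1_{\pi^{-1}A}$. This gives
\[
\frac1t\int_0^t 1_{\pi^{-1}A}(\sigma_s\varphi)\,\mathrm{d}s\to\nu(\pi^{-1}A)=\mu(A)
\]
for $\nu$-a.e. $\varphi$. Multiplying by $1_{\pi^{-1}B}$, integrating, and using dominated convergence together with Fubini yields
\[
\frac1t\int_0^t\nu\qty(\sigma_s^{-1}\pi^{-1}A\cap\pi^{-1}B)\,\mathrm{d}s\to\mu(A)\mu(B).
\]
Combining this limit with the pointwise inequality above gives the $\liminf$ bound. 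Fine ergodicity of $\mu$ enters only to guarantee that such an ergodic lift $\nu$ exists and is compatible; the assumption that $\nu$ is ergodic is what is really used.

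To show that the inequality can be strict, we would construct a simple example where the containment is far from equality. Take $X$ a circle or interval carrying a non-deterministic system with branching, for instance a system in which from each point one may either stay or move, with $S$ compact and shift-invariant. Choose an ergodic $\nu$ supported on the single family of trajectories of rotation, with $\mu$ Lebesgue. The set $V_S(s)^{-1}A$ then includes points that reach $A$ along trajectories not charged by $\nu$, such as stationary ones, so $V_S(s)^{-1}A\supseteq A\cup R_{-s}A$. Taking $A=B$ an arc of small measure, we get $\mu(V_S(s)^{-1}A\cap A)\geq\mu(A)>\mu(A)^2$ for all $s$, which makes the inequality strict. The main obstacle is checking that this example meets the hypotheses: $\mu$ must be finely ergodic in the paper's sense, which concerns strongly backward invariant sets, and $S$ must be compact and shift-invariant. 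If stationary branches destroy fine ergodicity, we would instead use two rotations with rationally independent speeds, or the half-disk system (\ref{eqn_hd}), where sliding along $y=0$ provides the extra trajectories.
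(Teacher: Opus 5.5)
Your proof of the inequality is correct and uses the same key fact as the paper: a trajectory $\phi$ with $\phi(s)\in A$ witnesses $\phi(0)\in V_S(s)^{-1}A$, and Birkhoff is then applied to $(S,\sigma,\nu)$. The bookkeeping differs slightly. The paper first derives a pointwise bound $\liminf_{t}\frac1t\int_0^t\chi_{V_S(s)^{-1}A}(x)\,ds\ge\mu(A)$ for $\mu$-a.e.\ $x$, descending from $\nu$-a.e.\ $\phi$, and then integrates against $\chi_B$ using Fatou's lemma. You instead stay on $S$, get the exact Ces\`aro limit $\frac1t\int_0^t\nu(\sigma_s^{-1}\pi^{-1}A\cap\pi^{-1}B)\,ds\to\mu(A)\mu(B)$ by dominated convergence, and compare termwise. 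Your version avoids both Fatou and the descent step, and it makes the defect explicit: it is the $\nu$-mass of trajectories starting in $B$ at points that can reach $A$ at time $s$, though not along that trajectory itself. The paper's version yields, in addition, the pointwise statement on $X$. Your measurability remark (analytic sets) covers a point the paper passes over.

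For strictness you use a genuinely different example, and it works. Your closing hedge is unnecessary and rests on a misreading. ``Finely ergodic'' only means that the chosen lift $\nu$ is ergodic; it is plain ``ergodic'' that refers to strongly backward invariant sets. So adding stationary trajectories to $S$ cannot destroy fine ergodicity. You should, however, fix the example concretely:
\begin{itemize}
\item Take $X=\mathbb{R}/\mathbb{Z}$ and let $S$ be the solutions of $\dot\theta\in[0,1]$, i.e.\ $\phi(t)=x+g(t)$ with $g$ non-decreasing, $1$-Lipschitz, $g(0)=0$. This set is shift-invariant and compact by Arzel\`a--Ascoli.
\item Let $\nu$ be the image of Lebesgue measure under $x\mapsto(t\mapsto x+t)$. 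It is shift-invariant and ergodic because the rotation flow is, and $\mu=\pi_*\nu$ is Lebesgue measure.
\item Constant trajectories give $V_S(s)^{-1}A\supseteq A$. Hence for an arc $A=B$ with $0<\mu(A)<1$, every average is $\ge\mu(A)>\mu(A)^2$; in fact $V_S(s)^{-1}A=X$ for $s\ge 1$.
\end{itemize}
The paper instead takes the half-disk system with $\mu$ the Dirac mass at the stationary trajectory through $(0,0)$ in $\bar S$, $B=\{(0,0)\}$, and $A$ a nonempty set avoiding the origin. That choice reuses the running example. Yours has a non-atomic $\mu$, so it shows the strict inequality is not an artifact of point masses.
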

We also define mixing and show that weakly mixing systems are ergodic (Definition \ref{def_wmix} and Theorem \ref{thm_wmix}). Thus, the basic ingredients of the ergodic theory can be generalized.

This article is organized as follows. In Section \ref{section_prelim}, we recall basic definitions and prepare preliminary results used throughout the article. In Section \ref{section_ergodic}, we define ergodicity of invariant measures of generalized dynamical systems and consider basic properties of them. In particular, we show that generalizations of classical results hold. In Section \ref{section_birkhoff}, we discuss the aspects related to the Birkhoff ergodic theorem and mixing properties.  In Section \ref{section_conclusion}, we give concluding remarks.

\section{Preliminaries}\label{section_prelim}
In this section, we present preliminary results used in the discussion below.
In what follows, $X$ is assumed to be a compact metric space. The set $C(\mathbb{R}, X)$ of all continuous maps $\mathbb{R} \to X$ is assumed to be given the compact-open topology.

\subsection{Invariance and ergodicity of flows}
First, we recall basic definitions of invariance and ergodicity for flows.
\begin{definition}
Let $\Phi$ be a flow on $X$.
\begin{enumerate}
\item A subset $A \subset X$ is \textbf{backward invariant} if 
\[
 \qty(\Phi^t)^{-1} A \subset A
\]
for all $t\geq 0$. 

\item Also, $A \subset X$ is \textbf{invariant} if 
\[
 \qty(\Phi^t)^{-1} A = A
\]
for all $t\geq 0$. 
\item A measure $\mu$ is \textbf{invariant} if it is a Borel probability measure on $X$ and
\[
 \mu\qty(\qty(\Phi^t)^{-1} A) = \mu(A)
\]
holds for all Borel subset $A \subset X$ and for all $t\geq 0$.
\item An invariant measure $\mu$ is \textbf{ergodic} if $\mu(A) >0$ implies $\mu(A) = 1$  for all invariant subset $A$.
\end{enumerate}
\end{definition}
Thus, the ergodicity of an invariant measure is defined in terms of invariant sets. However, we can alternatively define it by backward invariant sets. This observation will be important for generalizing the notion of ergodicity to situations without uniqueness.

\begin{lemma}\label{lem_bkwd}
Let $\Phi$ be a flow on $X$ and $\mu$ be an invariant measure. Then, the following are equivalent.

\begin{enumerate}
 \item $\mu$ is ergodic.
 \item For all backward invariant subset $A$, $\mu(A) = 1$ if $\mu(A) >0$.
\end{enumerate}
\end{lemma}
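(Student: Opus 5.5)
The direction (2) $\Rightarrow$ (1) is immediate. Every invariant set $A$, with $(\Phi^t)^{-1}A = A$ for all $t \geq 0$, is in particular backward invariant, so the hypothesis in (2) applies to it and $\mu(A) > 0$ forces $\mu(A) = 1$. All the work is in (1) $\Rightarrow$ (2). There the plan is to take a measurable backward invariant set $A$ with $\mu(A) > 0$ and build from it a genuinely invariant set $\tilde A$ of the same measure, and then apply ergodicity to $\tilde A$.

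First, since $\Phi$ is a flow, each $\Phi^t$ is a bijection with $(\Phi^t)^{-1} = \Phi^{-t}$. Backward invariance therefore says $\Phi^{-t}A \subset A$ for all $t \geq 0$, so $A_t := (\Phi^t)^{-1}A$ is decreasing in $t$. Invariance of $\mu$ gives $\mu(A_t) = \mu(A)$, so $\mu(A \setminus A_t) = 0$ for every $t \geq 0$. I would then set
\[
 \tilde A := \bigcap_{n \in \mathbb{N}} (\Phi^{n})^{-1} A .
\]
This is a countable intersection, so it is measurable. Because $A_t$ is monotone in $t$, $\tilde A$ coincides with $\bigcap_{t \geq 0} A_t$: for any $t$, pick $n \geq t$, and then $A_n \subset A_t$. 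It follows that $\mu(A \setminus \tilde A) \leq \sum_n \mu(A \setminus A_n) = 0$, so $\mu(\tilde A) = \mu(A)$.

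Next I check that $\tilde A$ is invariant. For $s \geq 0$ we have $(\Phi^s)^{-1}\tilde A = \bigcap_{t \geq 0} A_{t+s} = \bigcap_{t \geq s} A_t$. The family is decreasing, and $A_t \supset A_s$ for $t \leq s$, so adding the sets with $t < s$ does not change the intersection; hence this equals $\bigcap_{t\ge 0} A_t = \tilde A$. Thus $(\Phi^s)^{-1}\tilde A = \tilde A$ for all $s \geq 0$. Since $\mu(\tilde A) = \mu(A) > 0$, ergodicity gives $\mu(\tilde A) = 1$, and therefore $\mu(A) = 1$.

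The only point needing care is measurability. An uncountable intersection over $t \geq 0$ need not be Borel, and that is exactly why I pass to the countable intersection over $n \in \mathbb{N}$ and use monotonicity to identify it with the intersection over all $t$. Similarly, the null-set estimate must use countably many $t$. If the definition were read without monotonicity, an alternative is a $\limsup$-type construction; here, though, monotonicity follows directly from backward invariance together with the group property $\Phi^{t+s} = \Phi^t \circ \Phi^s$. I expect no other obstacle.
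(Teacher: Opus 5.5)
Your proof is correct and is essentially the paper's argument: (2) $\Rightarrow$ (1) because invariant sets are backward invariant, and (1) $\Rightarrow$ (2) by passing to the invariant set $\bigcap_{t>0}(\Phi^t)^{-1}A$, which has the same measure as $A$. Your use of monotonicity to reduce this to a countable intersection over $n \in \mathbb{N}$ supplies the measurability and null-set details that the paper leaves implicit.
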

\begin{proof}
Let $\mu$ be an ergodic invariant measure and $A$ be a backward invariant set with $\mu(A) > 0$. Then, it can be checked that
\[
 B = \bigcap_{t>0} \qty( \Phi^t)^{-1} A
\]
is an invariant set with $\mu(B) = \mu(A)$. Therefore, $\mu(A) = 1$.

Since an invariant set is backward invariant, (2) implies (1). 
\end{proof}
\subsection{Definition and basic properties of generalized dynamical systems}

Now, we review the outline of Yorke's theory of the axiomatic approach to dynamical systems defined by differential equations. Details can be found in \cite{suda2022equivalence, suda2023equivalence, SUDA20241}.

A key observation is the existence of a natural flow on the space of all possible orbits:

\begin{theorem}
 The shift map $\sigma: \mathbb{R} \times C(\mathbb{R}, X) \to C(\mathbb{R}, X) $, defined by 
 \[
  \sigma(t,\phi)(s) := \phi(s+t),
 \]
 is a continuous flow.
\end{theorem}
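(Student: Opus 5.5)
The plan is to check the flow axioms directly and then prove joint continuity in the compact-open topology. Since $X$ is a compact metric space with metric $d$, the compact-open topology on $C(\mathbb{R}, X)$ is the topology of uniform convergence on compact subsets of $\mathbb{R}$. I will therefore work with the basic neighbourhoods
\[
 U(\phi; K, \varepsilon) := \{\psi \in C(\mathbb{R},X) \mid \sup_{s \in K} d(\psi(s), \phi(s)) < \varepsilon\},
\]
where $K \subset \mathbb{R}$ is compact and $\varepsilon > 0$.

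First, the algebraic properties. For each $t$ and $\phi$, the map $s \mapsto \phi(s+t)$ is continuous, so $\sigma(t,\phi) \in C(\mathbb{R},X)$. Clearly $\sigma(0,\phi) = \phi$. We also have
\[
 \sigma(t,\sigma(u,\phi))(s) = \sigma(u,\phi)(s+t) = \phi(s+t+u) = \sigma(t+u,\phi)(s),
\]
which gives the group property.

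Next, joint continuity at a point $(t_0,\phi_0)$. Fix a compact set $K$ and $\varepsilon>0$, and let $K' := K + [t_0-1, t_0+1]$, which is compact. Since $\phi_0$ is uniformly continuous on $K'$, there is $0<\delta<1$ such that $|a-b|<\delta$ with $a,b\in K'$ implies $d(\phi_0(a),\phi_0(b))<\varepsilon/2$. Now take $|t-t_0|<\delta$ and $\phi \in U(\phi_0; K', \varepsilon/2)$. For every $s \in K$, both $s+t$ and $s+t_0$ lie in $K'$, so the triangle inequality gives
\[
 d(\phi(s+t), \phi_0(s+t_0)) \le d(\phi(s+t),\phi_0(s+t)) + d(\phi_0(s+t),\phi_0(s+t_0)) < \varepsilon.
\]
Hence $\sigma(t,\phi) \in U(\sigma(t_0,\phi_0); K, \varepsilon)$, which proves continuity.

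The only step needing care is justifying that the compact-open topology coincides with uniform convergence on compacta, so that the sets $U(\phi;K,\varepsilon)$ form a neighbourhood base. This is the standard fact for maps into a metric space from a locally compact (here $\sigma$-compact) domain, and I will simply cite it. Alternatively, one could argue directly with subbasic sets $\{\psi \mid \psi(K)\subset W\}$ via a Lebesgue-number argument, but the metric formulation is cleaner.
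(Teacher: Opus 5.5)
Your proof is correct and complete. The group law is immediate. Choosing $K' = K + [t_0-1,t_0+1]$ with $\delta<1$ ensures that both $s+t$ and $s+t_0$ lie in $K'$ for every $s\in K$. Uniform continuity of $\phi_0$ on $K'$ together with $\phi\in U(\phi_0;K',\varepsilon/2)$ then shows that the product neighbourhood $(t_0-\delta,t_0+\delta)\times U(\phi_0;K',\varepsilon/2)$ is mapped into $U(\sigma(t_0,\phi_0);K,\varepsilon)$. The strict inequality for the supremum over $K$ holds because the continuous function $s\mapsto d(\phi(s+t),\phi_0(s+t_0))$ attains its maximum on the compact set $K$.

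The paper gives no proof of this theorem. It quotes it as the basic observation of Yorke's framework and leaves details to the cited references, so there is no competing argument to compare with.

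Two minor remarks. First, the compact-open topology coincides with the topology of compact convergence for an arbitrary domain whenever the target is metric (see, e.g., Munkres' \emph{Topology}, \S 46). So you need neither local nor $\sigma$-compactness of $\mathbb{R}$ there, and your argument never uses compactness of $X$.

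Second, there is a metric-free alternative via the exponential law. Since $\mathbb{R}$ is locally compact Hausdorff, evaluation $\mathbb{R}\times C(\mathbb{R},X)\to X$ is continuous, hence so is $(t,\phi,s)\mapsto\phi(s+t)$. A map $Z\to C(\mathbb{R},X)$ is continuous whenever its adjoint $Z\times\mathbb{R}\to X$ is. This gives continuity of $\sigma$ for any target space $X$ in two lines, at the cost of citing more general topology; your $\varepsilon$--$\delta$ argument is self-contained.
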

Thus, any shift-invariant subset $S \subset C(\mathbb{R},X)$ defines a flow $(\sigma, S)$.
Also we define
\[
 \pi: S \to X
\]
by evaluation at 0.
\begin{definition}[Yorke's axioms \cite{yorke1969spaces, suda2023equivalence}]
Let $S \subset C(\mathbb{R},X)$ be shift-invariant and $\pi: S \to \ X$ be the evaluation-at-0 map.
\begin{enumerate}
 \item $S$ satisfies the \textbf{compactness axiom} if $S$ is compact.
 \item $S$ satisfies the \textbf{existence axiom}  if $\pi$ is surjective.
 \item $S$ satisfies the \textbf{uniqueness axiom} if $\pi$ is injective. 
 \item $S$ satisfies the \textbf{switching axiom} if, for each $\phi$ and $\psi$ in $S$ with $\phi(t) = \psi(t')$, the concatenated orbit
 \[
  \psi\cdot \phi(s) = \begin{cases}
      \phi(s) & s \leq t\\
      \psi(s-t+t') & s \geq t
     \end{cases}
 \]
 is contained in $S$.
 \end{enumerate}
\end{definition}
\begin{example}
 Solutions of the half-disk system define a shift-invariant subset $S \subset C(\mathbb{R}, D)$. It satisfies the existence axiom, but not the others. Its closure $\bar S$ satisfies the compactness axiom as well.
\end{example}
The fundamental theorem of Yorke's axiomatic theory is as follows. It implies that we can describe general dynamical systems by dropping some of the axioms listed above.
\begin{theorem}
A shift-invariant subset $S \subset C(\mathbb{R}, X)$ satisfies the existence, uniqueness, and compactness axioms if and only if there is a flow $\Phi: \mathbb{R} \times X \to X$ such that
\[
 S = S_\Phi := \{\Phi(-,x) \mid x \in X\}.
\]
In this case, $(\sigma, S)$ and $( \Phi, X)$ are topologically conjugate.
\end{theorem}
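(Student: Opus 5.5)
The plan is to prove the two directions separately. The main tools are the standard correspondence between flows on $X$ and flows on path spaces, and the fact that a continuous bijection from a compact space onto a Hausdorff space is a homeomorphism.

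\emph{Sufficiency.} Suppose $\Phi$ is a flow on $X$, and set $S_\Phi = \{\Phi(-,x)\mid x\in X\}$.
\begin{enumerate}
\item Shift invariance is immediate: $\sigma(t,\Phi(-,x)) = \Phi(-,\Phi(t,x))$.
\item The existence and uniqueness axioms hold because $\pi(\Phi(-,x)) = x$. Hence $\pi$ is a bijection, with inverse $x\mapsto \Phi(-,x)$.
\item For compactness, I will show that this inverse map $X\to C(\mathbb{R},X)$ is continuous for the compact-open topology. Since $X$ is a compact metric space, it suffices to have uniform convergence on compact intervals $[-T,T]$. This follows from the uniform continuity of $\Phi$ on the compact set $[-T,T]\times X$.
\item So $S_\Phi$ is the continuous image of the compact space $X$, hence compact.
\item The inverse map also intertwines the flows: $\Phi(-,\Phi^t x) = \sigma^t(\Phi(-,x))$. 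Therefore it is a topological conjugacy between $(\Phi,X)$ and $(\sigma,S_\Phi)$.
\end{enumerate}

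\emph{Necessity.} Suppose $S$ satisfies the existence, uniqueness and compactness axioms.
\begin{enumerate}
\item The map $\pi$ is continuous, since evaluation at $0$ is continuous for the compact-open topology on $C(\mathbb{R},X)$.
\item By the existence and uniqueness axioms, $\pi$ is a bijection.
\item $S$ is compact and $X$ is Hausdorff, so $\pi$ is a homeomorphism.
\item Define $\Phi(t,x) := \pi(\sigma(t,\pi^{-1}(x))) = \pi^{-1}(x)(t)$. This is continuous as a composition of continuous maps, using the continuity of $\sigma$ from the preceding theorem.
\item The flow property $\Phi(t+s,x)=\Phi(t,\Phi(s,x))$ follows from $\sigma^{t+s}=\sigma^t\sigma^s$ together with $\pi^{-1}(\pi(\sigma^s\phi)) = \sigma^s\phi$. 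The latter uses that $\sigma^s\phi\in S$ (shift invariance) and the injectivity of $\pi$.
\item Finally, $\Phi(-,x) = \pi^{-1}(x)$, so $S = S_\Phi$.
\end{enumerate}

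The main obstacle is the compactness step in the sufficiency direction. One must handle the compact-open topology on the non-compact domain $\mathbb{R}$ carefully, and verify continuity of $x\mapsto\Phi(-,x)$ via uniform convergence on compacta, rather than just pointwise. Everything else is formal bookkeeping with $\pi$ and $\sigma$.
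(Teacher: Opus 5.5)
Your proof is correct and complete. Two small checks are left implicit but are immediate: $\Phi(0,x)=\pi^{-1}(x)(0)=x$, and in the sufficiency direction $x\mapsto\Phi(-,x)$ is a homeomorphism onto $S_\Phi$ because its inverse is the continuous map $\pi$.

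The paper gives no proof of this statement and defers to Yorke and the author's earlier work. The standard argument behind the cited result is the one you give: $\pi$ is a continuous bijection from the compact space $S$ onto the Hausdorff space $X$, hence a homeomorphism conjugating $\sigma$ to $\Phi=\pi\circ\sigma\circ(\mathrm{id}\times\pi^{-1})$.
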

\begin{corollary}
If $\Phi: \mathbb{R} \times X \to X$ is a flow, then $S_\Phi$ is homeomorphic to $X$.
\end{corollary}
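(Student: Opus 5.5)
The plan is to deduce the corollary from the preceding theorem, applied to $S = S_\Phi$, together with the concrete form of the conjugacy. Since $\Phi$ is a flow, for each $x \in X$ the map $t \mapsto \Phi(t,x)$ is continuous. Shift-invariance of $S_\Phi$ follows from the group property:
\[
\sigma(t,\Phi(-,x)) = \Phi(-+t,x) = \Phi(-,\Phi(t,x)).
\]
The existence axiom holds because $\pi(\Phi(-,x)) = \Phi(0,x) = x$.

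For the uniqueness axiom, suppose two elements $\Phi(-,x)$ and $\Phi(-,y)$ agree at $0$. Then $x=y$, so they are the same curve. The compactness axiom needs a short argument. The map
\[
h: X \to C(\mathbb{R},X), \qquad h(x) = \Phi(-,x),
\]
is continuous for the compact-open topology. This holds because $\Phi$ is jointly continuous and each $[-T,T]$ is compact, so $\Phi$ is uniformly continuous on $[-T,T]\times X$. Hence $S_\Phi = h(X)$ is compact, being the continuous image of the compact space $X$.

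Thus $S_\Phi$ satisfies the existence, uniqueness and compactness axioms, and the theorem gives a topological conjugacy between $(\sigma,S_\Phi)$ and $(\Phi,X)$. In particular, $S_\Phi$ is homeomorphic to $X$.

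To avoid relying on how the conjugacy is built inside the theorem, I would also verify directly that $h$ is the homeomorphism. It is a continuous bijection onto $S_\Phi$ with inverse $\pi|_{S_\Phi}$. It is then a homeomorphism either because $X$ is compact and $S_\Phi$ is Hausdorff ($C(\mathbb{R},X)$ with the compact-open topology is metrizable), or simply because evaluation at $0$ is continuous.

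The only step that needs genuine care is the continuity of $h$ in the compact-open topology, that is, the uniform-on-compacts estimate. This is standard via uniform continuity of $\Phi$ on $[-T,T]\times X$.
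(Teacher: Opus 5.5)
Your proposal is correct and follows the paper, which gives no separate proof and reads the corollary directly off the conjugacy clause of the preceding theorem applied to $S=S_\Phi$. Your direct check of the three axioms is harmless but redundant, since the ``if'' direction of that theorem supplies them once shift-invariance is verified; meanwhile your extra verification that $x\mapsto\Phi(-,x)$ is a continuous bijection onto $S_\Phi$ with continuous inverse $\pi|_{S_\Phi}$ is by itself a complete, self-contained proof that does not need the theorem at all.
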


Now, we turn to the description of invariant sets. As we cannot assume uniqueness of trajectories, this notion comes with weak and strong versions.
\begin{definition}
Let $S \subset C(\mathbb{R}, X)$ be a shift-invariant subset. A subset $A \subset X$ is \textbf{forward weakly invariant} if, for all $x\in A$, there exists $\phi \in S$ such that $\phi(0) = x$ and $\phi(t)\in A$ for all $t\geq 0$. A subset $A \subset X$ is \textbf{backward strongly invariant} if, for all $x\in A$ and $\phi \in S$ with $\phi(0) = x$, we have $\phi(t)\in A$ for all $t\leq 0$.
\end{definition}
Weak and strong invariant sets can be characterized in terms of shift invariant subsets of $S$:
\begin{theorem}\label{thm_inv}
Let $S \subset C(\mathbb{R}, X)$ be a shift-invariant subset and $A \subset X$.
\begin{enumerate}
 \item The subset $A$ is weakly forward (backward) invariant if and only if there exists a forward (backward) invariant subset $W \subset S$ such that $A = \pi W$.
 \item The subset $A$ is strongly forward (backward) invariant if and only if $\pi^{-1} A$ is forward (backward) invariant.
\end{enumerate}
\end{theorem}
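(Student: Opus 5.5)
We prove the four implications, namely parts (1) and (2) each for the forward and the backward case, by unwinding the definitions. Throughout we use that $S$ is shift-invariant and that $\pi(\sigma(t,\phi)) = \phi(t)$.

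For (1), suppose first that $A$ is weakly forward invariant. We set
\[
 W := \{\phi \in S \mid \phi(t) \in A \text{ for all } t \geq 0\}.
\]
If $\phi \in W$ and $s \geq 0$, then $\sigma(s,\phi)(t) = \phi(s+t) \in A$ for all $t \geq 0$, so $\sigma(s,\phi) \in W$. Thus $W$ is forward invariant. We always have $\pi W \subset A$, since $\phi(0) \in A$ for $\phi \in W$. Conversely, for $x \in A$, weak forward invariance gives some $\phi \in S$ with $\phi(0)=x$ and $\phi([0,\infty)) \subset A$; this $\phi$ lies in $W$, so $A \subset \pi W$.

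For the converse, let $W \subset S$ be forward invariant with $A = \pi W$. Given $x \in A$, pick $\phi \in W$ with $\phi(0) = x$. For each $t \geq 0$ we have $\sigma(t,\phi) \in W$, hence $\phi(t) = \pi(\sigma(t,\phi)) \in \pi W = A$. So $A$ is weakly forward invariant. The backward case is identical, with $t \leq 0$ and $W := \{\phi \in S \mid \phi(t)\in A \text{ for all } t\le 0\}$.

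For (2), suppose $A$ is strongly forward invariant, and take $\phi \in \pi^{-1}A$ and $s \geq 0$. Then $\phi(0) \in A$, so by strong invariance $\phi(s) \in A$. Hence $\pi(\sigma(s,\phi)) = \phi(s) \in A$; since $\sigma(s,\phi) \in S$ by shift invariance, we get $\sigma(s,\phi)\in\pi^{-1}A$. So $\pi^{-1}A$ is forward invariant.

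Conversely, suppose $\pi^{-1}A$ is forward invariant, and let $x \in A$ and $\phi\in S$ with $\phi(0)=x$. Then $\phi \in \pi^{-1}A$, so $\sigma(t,\phi) \in \pi^{-1}A$ for every $t \ge 0$, which means $\phi(t)\in A$. Thus $A$ is strongly forward invariant. The backward case again follows by replacing $t\ge0$ with $t\le0$.

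The main point needing care is the convention for forward and backward invariance of subsets of $S$ under the flow $\sigma$. Here we read ``forward invariant'' as $\sigma(t,W)\subset W$ for $t\ge0$, and ``backward invariant'' as $\sigma(t,W)\subset W$ for $t\le0$. For a flow the latter is equivalent to $(\sigma^t)^{-1}W \subset W$ for $t\geq 0$, which matches the earlier definition; with this observation the argument goes through unchanged. Otherwise the proof is purely definitional, and no axioms beyond shift invariance of $S$ are needed.
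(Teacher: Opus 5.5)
Your proof is correct, and it takes a different route only in that the paper gives no argument in the text: it cites Theorems 3.18 and 3.19 of \cite{SUDA20241}. You instead unwind the definitions directly.

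Your version makes two things visible that the citation hides. First, only shift-invariance of $S$ is used; the compactness and switching axioms play no role. Second, the backward case depends on a convention for subsets of $S$. You read backward invariance as $(\sigma^t)^{-1}W \subset W$ for $t \ge 0$, which matches the definition for flows in Section 2.1. You also correctly note that this is equivalent to $\sigma(t,W)\subset W$ for $t\le 0$, since $\sigma^t$ is a bijection of $S$ with inverse $\sigma^{-t}$.

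Your explicit witness $W=\{\phi\in S : \phi(t)\in A \text{ for all } t\ge 0\}$ is in fact the largest forward-invariant subset of $\pi^{-1}A$. This makes the relation between (1) and (2) clear: $A$ is weakly forward invariant exactly when $\pi W = A$, and strongly forward invariant exactly when $W=\pi^{-1}A$.

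The paper's citation buys only brevity. Your argument is what a reader needs to check the statement independently of the earlier work.
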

\begin{proof}
These results follow from Theorem 3.18 and Theorem 3.19 in \cite{SUDA20241}.
\end{proof}
\begin{lemma}
Subset $A$ is forward strongly invariant if $X\backslash A$ is backward strongly invariant.
\end{lemma}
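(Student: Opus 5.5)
Write $B = X\setminus A$ and assume $B$ is backward strongly invariant. The aim is to show that $A$ is forward strongly invariant, meaning: for every $x\in A$ and every $\phi\in S$ with $\phi(0)=x$, we have $\phi(t)\in A$ for all $t\geq 0$. The natural route is a direct argument by contradiction that uses only shift-invariance of $S$. As a cross-check, there is also a one-line complementation argument through Theorem \ref{thm_inv}(2).

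\textbf{Direct argument.} Fix $x\in A$ and $\phi\in S$ with $\phi(0)=x$, and suppose $\phi(t_0)\in B$ for some $t_0\geq 0$.
\begin{enumerate}
\item Set $\psi=\sigma(t_0,\phi)$, so that $\psi(s)=\phi(s+t_0)$. By shift-invariance, $\psi\in S$.
\item We have $\psi(0)=\phi(t_0)\in B$.
\item Backward strong invariance of $B$ then gives $\psi(s)\in B$ for all $s\leq 0$.
\item Take $s=-t_0\leq 0$. Then $\psi(-t_0)=\phi(0)=x\in B$, which contradicts $x\in A$.
\end{enumerate}
Hence $\phi(t)\in A$ for all $t\geq 0$, and $A$ is forward strongly invariant.

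\textbf{Alternative via Theorem \ref{thm_inv}(2).} The same conclusion follows at the level of $S$. Since $B$ is backward strongly invariant, $\pi^{-1}B$ is a backward invariant subset of the flow $(\sigma,S)$. Its complement is $S\setminus\pi^{-1}B=\pi^{-1}A$. For flows, the complement of a backward invariant set is forward invariant: if $\phi\in\pi^{-1}A$ and $\sigma(t,\phi)\in\pi^{-1}B$ for some $t\geq 0$, then backward invariance gives $\phi=\sigma(-t,\sigma(t,\phi))\in\pi^{-1}B$, which is impossible. So $\pi^{-1}A$ is forward invariant, and Theorem \ref{thm_inv}(2) says $A$ is forward strongly invariant.

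The only point needing care is the direction of time, namely that backward invariance is applied at the shifted orbit $\psi$ with the nonpositive time $-t_0$. No compactness, existence, or switching axiom is required.
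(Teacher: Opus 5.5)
Your proof is correct and is essentially the paper's argument. The paper also supposes a trajectory from $x\in A$ reaches $X\setminus A$ at some positive time and uses backward strong invariance to conclude $x\notin A$; it leaves implicit the shift to $\sigma(t_0,\phi)$, which you make explicit, and your alternative route via Theorem~\ref{thm_inv}(2) is also valid but not needed.
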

\begin{proof}
Let $x \in A$ and assume that there exists $\phi \in S$ with $\phi(0) = x$ that escapes $A$ at some time $t >0$. By backward invariance, $\phi(t) \not \in A$ implies $x \not \in A$, which is a contradiction. 
\end{proof}
The following construction gives the set-valued description of the dynamics.
\begin{definition}
Let $S \subset C(\mathbb{R}, X)$ be a shift-invariant subset. For a subset $E \subset X$, we define
\[
 V_S(t) E := \{\phi(t) \mid  \phi \in S \text{ s.t. } \phi(0) \in E\}.
\]
Also, we define
\[
 V_S(t)^{-1} E := \{x \in X \mid \text{ there exists } \phi \in S \text{ s.t. } \phi(0) = x \text{ and } \phi(t) \in E\}.
\]
\end{definition}
\begin{lemma}
If subset $A$ is forward weakly invariant, then $A \subset V_S(t)^{-1} A $ for all $t \geq 0$. 
\end{lemma}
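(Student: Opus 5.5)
The inclusion follows straight from the definitions of forward weak invariance and of $V_S(t)^{-1}$: the trajectory that witnesses forward weak invariance at a point also witnesses membership of that point in $V_S(t)^{-1}A$. No earlier result is needed, although Theorem \ref{thm_inv} gives an equivalent route.

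Fix $t \geq 0$ and a point $x \in A$. Forward weak invariance of $A$ supplies some $\phi \in S$ with $\phi(0) = x$ and $\phi(s) \in A$ for every $s \geq 0$. Taking $s = t$, which is allowed since $t \geq 0$, gives $\phi(t) \in A$. So $\phi$ is an element of $S$ with $\phi(0) = x$ and $\phi(t) \in A$, which is exactly the condition for $x \in V_S(t)^{-1} A$. Since $x \in A$ was arbitrary, $A \subset V_S(t)^{-1}A$, and since $t \geq 0$ was arbitrary, this holds for all $t \geq 0$.

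For the alternative route via Theorem \ref{thm_inv}(1), write $A = \pi W$ with $W \subset S$ forward invariant. For $\phi \in W$ we have $\sigma(t,\phi) \in W$, hence $\phi(t) = \pi\sigma(t,\phi) \in A$, and the same conclusion follows.

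There is no genuine obstacle here. The only point needing care is that a single witnessing trajectory must serve both conditions: it must start at $x$ and also land in $A$ at time $t$. Forward weak invariance supplies one trajectory doing both, which is precisely what the existential definition of $V_S(t)^{-1}$ requires.
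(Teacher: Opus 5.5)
Your proof is correct and matches the paper's argument: the trajectory $\phi$ that witnesses forward weak invariance at $x$ has $\phi(0)=x$ and $\phi(t)\in A$, so it directly witnesses $x\in V_S(t)^{-1}A$. The alternative route through Theorem \ref{thm_inv} is also valid but not needed.
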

\begin{proof}
Let $x \in A$. By the forward weak invariance, there exists $\phi \in S$ with $\phi(t) \in A$ for all $t \geq 0$. Therefore $x \in V_S(t)^{-1} A $ for all $t \geq 0$. 
\end{proof}
\begin{lemma}\label{lem_bkwdsubset}
A subset $A$ is backward strongly invariant if and only if $V_S(t)^{-1} A \subset A$ for all $t \geq 0$. \end{lemma}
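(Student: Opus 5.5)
The plan is to argue directly from the definitions, proving each implication separately. The two facts that make this work are: $x \in V_S(t)^{-1}A$ means that some trajectory through $x$ at time $0$ lands in $A$ at time $t$, and $S$ is closed under shifts.

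For the forward direction, assume $A$ is backward strongly invariant and fix $t \geq 0$ and $x \in V_S(t)^{-1}A$. By definition there is $\phi \in S$ with $\phi(0) = x$ and $\phi(t) \in A$. Since $S$ is shift-invariant, $\psi := \sigma(t,\phi) \in S$, and it satisfies $\psi(0) = \phi(t) \in A$. Backward strong invariance applied to the point $\phi(t)\in A$ and the trajectory $\psi$ gives $\psi(s) \in A$ for all $s \leq 0$. Taking $s = -t \leq 0$ yields $x = \phi(0) = \psi(-t) \in A$. Hence $V_S(t)^{-1}A \subset A$.

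For the converse, assume $V_S(t)^{-1}A \subset A$ for every $t\ge 0$. Let $x \in A$, let $\phi \in S$ with $\phi(0) = x$, and fix $s \leq 0$; we must show $\phi(s)\in A$. Put $t := -s \geq 0$ and $\psi := \sigma(s,\phi) \in S$. Then $\psi(0) = \phi(s)$ and $\psi(t) = \phi(s+t) = \phi(0) = x \in A$. So $\phi(s) \in V_S(t)^{-1}A \subset A$, as required.

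No real obstacle arises; the argument does not need Theorem \ref{thm_inv}. The one point needing care is the shift convention $\sigma(t,\phi)(s) = \phi(s+t)$, so that the time offsets come out with the correct signs, together with using shift-invariance of $S$ to move the base point of the trajectory. No other axioms, such as compactness or switching, are used.
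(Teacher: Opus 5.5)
Your proof is correct and follows the same direct argument as the paper: in each direction you unwind the definition of $V_S(t)^{-1}A$. The only difference is that you make the use of shift-invariance of $S$ explicit, which the paper leaves implicit. Concretely, you apply backward strong invariance to $\sigma(t,\phi)$, and you exhibit $\sigma(s,\phi)$ as the witness for $\phi(s)\in V_S(-s)^{-1}A$.
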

\begin{proof}
Let $x \in V_S(t)^{-1} A $. Then, there exists $\phi \in S$ with $\phi(t) \in A$ and $\phi(0)= x$. By the backward strong invariance, $x \in A$. Conversely, let $V_S(t)^{-1} A \subset A$ for all $t \geq 0$. For each $x \in A$ and $\phi \in S$ with $\phi(0) = x$, we have $\phi(-s) \in V(s)^{-1} A \subset A$ for all $s \geq 0$. Thus, $A$ is strongly backward invariant.
\end{proof}
Invariant measures can be defined as follows:
\begin{definition}[Invariant measure \cite{SUDA20241}]
Let $S$ be a compact shift-invariant subset of $C(\mathbb{R}, X)$.
A Borel probability measure $\mu$ on $X$ is \textbf{invariant} if there is a shift-invariant measure $\nu$ on $S$ such that $\mu = \pi_*\nu$.
\end{definition}
One of the basic properties of the invariant measures is that they do not decrease along backward transformation via $V_S$:
\begin{lemma}
Let $\mu$ be an invariant measure of a shift-invariant subset $S \subset C(\mathbb{R}, X)$. For all $t \in \mathbb{R}$ and Borel set $A \subset X$, we have 
\[\mu(A) \leq \mu(V_S(t)^{-1} A).\]
\end{lemma}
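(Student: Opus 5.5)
The plan is to lift the statement to the trajectory space $S$, where the shift $\sigma$ is an honest flow and $\nu$ is invariant, and then push back down with $\pi$. Fix $t \in \mathbb{R}$ and a Borel set $A \subset X$. Let $\nu$ be a shift-invariant measure on $S$ with $\mu = \pi_*\nu$, as provided by the definition of invariant measure. The proof rests on the set-theoretic inclusion
\[
 \sigma(t,\cdot)^{-1}\qty(\pi^{-1}A) \subset \pi^{-1}\qty(V_S(t)^{-1}A).
\]

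To check the inclusion, take $\phi \in S$ with $\sigma(t,\phi) \in \pi^{-1}A$. This means $\phi(t) = \sigma(t,\phi)(0) \in A$. With $x := \phi(0)$, the trajectory $\phi$ itself witnesses $x \in V_S(t)^{-1}A$, since $\phi \in S$, $\phi(0) = x$ and $\phi(t) \in A$. Hence $\pi(\phi) \in V_S(t)^{-1}A$. The argument is the same for negative $t$, because $V_S(t)^{-1}$ is defined for all real $t$ by the same formula.

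Then compute
\[
 \mu(A) = \nu\qty(\pi^{-1}A) = \nu\qty(\sigma(t,\cdot)^{-1}\pi^{-1}A) \leq \nu\qty(\pi^{-1}V_S(t)^{-1}A) = \mu\qty(V_S(t)^{-1}A).
\]
The second equality is the shift invariance of $\nu$; it holds for all $t \in \mathbb{R}$ because $\sigma$ is a flow, so invariance under $\sigma(t,\cdot)$ for $t \geq 0$ gives invariance for negative times as well. The inequality is monotonicity of $\nu$ applied to the inclusion above.

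The main obstacle is measurability. The set $V_S(t)^{-1}A = \pi\qty(\sigma(t,\cdot)^{-1}\pi^{-1}A)$ is the continuous image of a Borel subset of the compact metrizable space $S$, so it is analytic but not obviously Borel. To handle this, I would read $\mu(V_S(t)^{-1}A)$ as the value of the completion of $\mu$: analytic sets are universally measurable, so the middle terms make sense for the completions of $\nu$ and $\mu$, and the completion of $\pi_*\nu$ agrees with $\pi_*$ of the completion of $\nu$ on such sets. When $A$ is closed (or compact) there is no issue at all, since compactness of $S$ and continuity of $\pi$ and $\sigma$ make $V_S(t)^{-1}A$ compact; I would mention this as the clean case.
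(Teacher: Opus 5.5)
Your proof is correct and is the natural argument. The paper states this lemma without proof (it comes from the author's earlier work), but the same mechanism appears later, in the proof of the weak Birkhoff-type theorem: the identity $V_S(s)^{-1}A = \pi\circ\sigma^{-s}\circ\pi^{-1}A$ and the inclusion $E \subset \pi^{-1}\pi E$ give $\chi_{V_S(s)^{-1}A}(\pi(\phi)) \geq \chi_A(\phi(s))$, and integrating this against the shift-invariant $\nu$ is exactly your chain of (in)equalities. Your measurability remark also addresses a point the paper passes over silently: it treats $V_S(t)^{-1}A$ as Borel, but in general that set is only analytic, so your completion argument (or compactness when $A$ is closed) is what makes $\mu(V_S(t)^{-1}A)$ well defined.
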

\begin{remark}
Supports of invariant measures are weakly invariant. Indeed, $\mathrm{supp} \,\mu = \pi\, \mathrm{supp} \,\nu$ by definition, so $\mathrm{supp} \,\mu$ is weakly invariant by Theorem \ref{thm_inv}.
\end{remark}

\section{Definition and basic property of ergodicity}\label{section_ergodic}
Now we consider the definition of ergodicity for dynamical systems without uniqueness of orbits and examine its basic properties.

In what follows, we assume $S \subset C(\mathbb{R}, X)$ to be a compact shift-invariant subset. Other assumptions will be indicated if necessary.

A basic idea of ergodicity is that every invariant set is either very large or small if measured by an invariant measure. 
While every notion of invariance can define its own version of ergodicity, it leads to triviality if it is too strong or too weak. The next result asserts that the backward strongly invariant subsets are invariant in the almost everywhere sense. This observation suggests that it is a natural generalization of the notion of invariant sets when trajectories are not uniquely determined.
\begin{lemma}\label{lem_bkwdsame}
Let $A \subset X$ be a backward strongly invariant subset and $\mu$ be an invariant measure. Then,
\[
 \mu(V_S(t)^{-1} A \,\Delta\, A )=0
\]
for all $t \geq 0$. In particular, we have
\[
 \mu(V_S(t)^{-1} A ) = \mu(A).
\]
\end{lemma}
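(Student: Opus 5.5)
By Lemma \ref{lem_bkwdsubset}, a backward strongly invariant set $A$ satisfies $V_S(t)^{-1}A \subset A$ for all $t \geq 0$. Hence
\[
 V_S(t)^{-1} A \,\Delta\, A = A \setminus V_S(t)^{-1} A,
\]
and everything reduces to showing $\mu(V_S(t)^{-1}A) \geq \mu(A)$. Together with the inclusion this gives both claims at once, since
\[
\mu(A\setminus V_S(t)^{-1}A) = \mu(A) - \mu(V_S(t)^{-1}A) \le 0 .
\]

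The required inequality $\mu(A) \le \mu(V_S(t)^{-1}A)$ is exactly the earlier lemma stating that invariant measures do not decrease under backward transformation via $V_S$. To keep the argument self-contained, I would recall why it holds. Write $\mu = \pi_*\nu$ with $\nu$ a shift-invariant measure on $S$. For any $\phi\in S$ with $\phi(t)\in A$, the point $\pi(\phi)=\phi(0)$ lies in $V_S(t)^{-1}A$. This gives the inclusion
\[
 \sigma(t,\cdot)^{-1}\pi^{-1}A \subset \pi^{-1}V_S(t)^{-1}A .
\]
Therefore
\[
\mu(A)=\nu(\pi^{-1}A)=\nu(\sigma(t,\cdot)^{-1}\pi^{-1}A)\le \nu(\pi^{-1}V_S(t)^{-1}A)=\mu(V_S(t)^{-1}A).
\]

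The only delicate point is measurability. The set $V_S(t)^{-1}A = \pi(\sigma_t^{-1}\pi^{-1}A)$ is a continuous image of a Borel set, so it is analytic rather than obviously Borel, and it need only be universally measurable. I would either invoke the earlier lemma as already handling this (with $\mu$ understood via its completion), or note the following. Because $A$ is backward strongly invariant, Theorem \ref{thm_inv} shows that $\pi^{-1}A$ is backward invariant under $\sigma$, so $\sigma_t^{-1}\pi^{-1}A\subset \pi^{-1}A$. The set $V_S(t)^{-1}A$ is then sandwiched between the Borel-measurable quantities used above, and the conclusion holds for the completion of $\mu$. I expect this measurability bookkeeping to be the main, and only, obstacle; the measure inequality itself is immediate.
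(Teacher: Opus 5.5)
Your proposal is correct and follows the paper's proof: Lemma \ref{lem_bkwdsubset} gives $V_S(t)^{-1}A\subset A$, so the symmetric difference reduces to $A\setminus V_S(t)^{-1}A$, and its measure vanishes by the monotonicity inequality $\mu(A)\le\mu(V_S(t)^{-1}A)$ coming from invariance. Two of your details are extras that the paper leaves implicit, and both are sound: the derivation of that inequality from $\sigma_t^{-1}\pi^{-1}A\subset\pi^{-1}V_S(t)^{-1}A$ and shift-invariance of $\nu$, and the observation that $V_S(t)^{-1}A$ is analytic, hence universally measurable.
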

\begin{proof}
By Lemma \ref{lem_bkwdsubset}, $V_S(t)^{-1} A \subset A$, so we have to check that \[\mu\qty(A \backslash V_S(t)^{-1} A ) = 0.\] This equality follows from $\mu(A) \leq \mu\qty( V_S(t)^{-1} A )$, which results from invariance.
\end{proof}
Given this observation, we consider the following formulation.
\begin{definition}\label{def_ergodicity}
Let $\mu$ be an invariant measure of $S$ with $\mu = (\pi)_* \nu$, where $\nu$ is a shift invariant measure. 
\begin{enumerate}
 \item $\mu$ is \textbf{ergodic} if $\mu(A) = 0$ or $1$ for all backward strongly invariant set $A$.
 \item $\mu$ is \textbf{finely ergodic} if $\nu$ is ergodic.
\end{enumerate}
\end{definition}
The next result clarifies the relation with the invariant measure on the shift-invariant space.
\begin{lemma}
Let $\mu$ be an ergodic invariant measure with a shift invariant measure $\nu$ such that $\mu = (\pi)_*\nu$. 
\begin{enumerate}
 \item If $\mu$ is finely ergodic, $\mu$ is ergodic.
 \item If $S$ satisfies the uniqueness axiom,  every ergodic $\mu$ is finely ergodic.
\end{enumerate}
\end{lemma}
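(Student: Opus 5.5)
For part (1), I will show that backward strongly invariant subsets of $X$ pull back to invariant subsets of $S$ for the shift flow, and then use the ergodicity of $\nu$. Let $A \subset X$ be a Borel backward strongly invariant set. By Theorem \ref{thm_inv}(2), $\pi^{-1}A$ is a backward invariant subset of $S$ for the flow $\sigma$. Since $\pi$ is continuous, $\pi^{-1}A$ is also Borel. Because $\nu$ is ergodic for $\sigma$, Lemma \ref{lem_bkwd} applied to the flow $(\sigma,S)$ gives $\nu(\pi^{-1}A)\in\{0,1\}$. The Lemma is stated with condition (2) "positive implies $1$", which is exactly the $0$ or $1$ dichotomy. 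Since $\mu(A)=\nu(\pi^{-1}A)$, we get $\mu(A)\in\{0,1\}$, so $\mu$ is ergodic. Lemma \ref{lem_bkwd} is stated for flows on compact metric spaces. $S$ is compact by standing assumption and metrizable, because $C(\mathbb{R},X)$ with the compact-open topology is metrizable for compact metric $X$. So the lemma applies.

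For part (2), assume the uniqueness axiom and that $\mu$ is ergodic. I will show that every $\sigma$-invariant Borel set $B\subset S$ has $\nu(B)\in\{0,1\}$, by pushing $B$ down to $X$.
\begin{itemize}
\item Under uniqueness, $\pi$ is injective and continuous on the compact space $S$, so it is a homeomorphism onto its image $\pi(S)$, which is compact.
\item Set $A=\pi(B)$. It is Borel, being a Borel subset of the compact set $\pi(S)$ under a homeomorphism. Injectivity gives $\pi^{-1}A=B$.
\item Since $B$ is invariant, it is in particular backward invariant. By Theorem \ref{thm_inv}(2), $A$ is backward strongly invariant.
\item Directly: if $\phi(0)\in A$, then $\phi\in B$ by injectivity, so $\sigma(t,\phi)\in B$ for $t\le 0$, hence $\phi(t)\in A$.
\end{itemize}
Ergodicity of $\mu$ then gives $\nu(B)=\mu(A)\in\{0,1\}$, so $\nu$ is ergodic and $\mu$ is finely ergodic.

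The main obstacle is measurability in part (2), namely that $\pi(B)$ is Borel in $X$. This follows from $\pi$ being a continuous injection of a compact metric space, which gives a homeomorphism onto a closed subset; alternatively one can invoke the Lusin–Souslin theorem.

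A second point needs care: without the existence axiom, $\pi(S)$ may be a proper subset of $X$. This does not matter. Points outside $\pi(S)$ carry no trajectories, so strong backward invariance is vacuous there, and $\mu$ is supported on $\pi(S)$ in any case.
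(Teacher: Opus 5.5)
Your proof is correct and follows the paper's argument. In (1) you pull a backward strongly invariant set back to the shift-backward-invariant set $\pi^{-1}A$ and apply Lemma \ref{lem_bkwd} to $\nu$; in (2) you use injectivity to get $B=\pi^{-1}\pi(B)$ with $\pi(B)$ backward strongly invariant. Your extra checks fill in details the paper leaves implicit: that $S$ is metrizable, and that $\pi(B)$ is Borel because $\pi$ is a homeomorphism onto the closed set $\pi(S)$.
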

\begin{proof}
(1) Let $A$ be a backward strongly invariant subset with $\mu(A) > 0$. Then, $\pi^{-1}(A)$ is backward invariant with respect to the shift map. As $\mu(A) = \nu\qty(\pi^{-1}(A))$, we have $\mu(A) = 1$ by Lemma \ref{lem_bkwd}.

(2) Let $W \subset S$ be shift invariant. By the uniqueness, we have $W = \pi^{-1} \circ \pi (W).$ Therefore, $\nu(W) = \nu\qty(\pi^{-1} \circ \pi (W)) = \mu(\pi (W)).$ Since $\pi(W)$ is  strongly backward invariant, $\nu(W) = 0$ or $1$.
\end{proof}
\begin{remark}
If $S$ is compact, there is at least one finely ergodic invariant measure.
\end{remark}
\begin{remark}
For an invariant measure $\mu$, being finely ergodic does not imply that every weakly backward invariant set has measure either 0 or 1. The converse is not true. 
\end{remark}
The following observation is often useful to establish the existence of ergodic measures in concrete examples.
\begin{lemma}
If $X$ is the only strongly backward invariant set of $S$, then every invariant measure is ergodic.
\end{lemma}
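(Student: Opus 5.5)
The argument is a direct check of Definition~\ref{def_ergodicity}, and it needs none of the measure-theoretic lemmas. One point of interpretation comes first. The empty set is always backward strongly invariant, since the defining condition quantifies over points of $A$ and so holds vacuously. The hypothesis ``$X$ is the only strongly backward invariant set'' must therefore be read as ``$X$ is the only \emph{nonempty} strongly backward invariant set''. Equivalently, by Lemma~\ref{lem_bkwdsubset}, the only subsets $A \subset X$ with $V_S(t)^{-1}A \subset A$ for all $t \geq 0$ are $\emptyset$ and $X$. I will state this reading explicitly at the start of the proof.

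With that reading, the steps are as follows. Let $\mu$ be any invariant measure, so $\mu = \pi_*\nu$ for some shift-invariant $\nu$ on $S$, and $\mu$ is a Borel probability measure on $X$. Let $A$ be a backward strongly invariant set. By hypothesis, either $A = \emptyset$ or $A = X$.
\begin{itemize}
\item If $A = \emptyset$, then $\mu(A) = 0$.
\item If $A = X$, then $\mu(A) = \mu(X) = 1$, because $\mu$ is a probability measure.
\end{itemize}
In both cases $\mu(A) \in \{0,1\}$, which is exactly the definition of ergodicity.

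A measurability remark: Definition~\ref{def_ergodicity} only makes sense for sets $A$ on which $\mu$ is defined. Here the only sets in question are $\emptyset$ and $X$, which are trivially Borel, so nothing arises.

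I expect no genuine obstacle. The only care needed is the interpretive point about the empty set, which I will address explicitly so the statement is not vacuously false.
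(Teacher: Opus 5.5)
Your proof is correct. It is the immediate check from the definition of ergodicity that the paper has in mind; the paper states this lemma without proof, treating it as evident. Your remark that the hypothesis must be read as allowing the vacuously backward strongly invariant empty set, i.e.\ as ``$\emptyset$ and $X$ are the only such sets'', is a correct and worthwhile clarification of the statement as written.
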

\begin{example}[Bean model]
The bean model considered in \cite{buzzi2016chaotic}  is one of the simplest examples of piecewise-smooth dynamical systems with chaotic behavior. This is a piecewise-smooth planar vector field defined on
\[
 \Lambda := \{(x,y) : -1 \leq x \leq 1, (x^4-x^2)/2 \leq y \leq 1-x^2\}
\]
and it is given by
\[
 X(x,y) = \begin{cases}
    (1,-2 x) & y > 0\\
    (-2, 4x^3 - 2 x) & y <0.
   \end{cases}
\] 

It is straightforward to show that the only strongly backward invariant set is the whole phase space. Thus, the bean model is ergodic with respect to every invariant measure.
\end{example}
\begin{example}
The half-disk system is ergodic. Indeed, we observe that the only strongly backward invariant set is the whole phase space. 
\end{example}
Under this definition of ergodicity, results on the non-decomposability hold analogously to the classical case. First, extremal invariant measures are ergodic.
\begin{theorem}
Extremal invariant measures are finely ergodic.
\end{theorem}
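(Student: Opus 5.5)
The idea is to lift the extremality of $\mu$ up to the orbit space $S$, and then use the classical fact that extreme shift-invariant measures of a flow are ergodic. Since fine ergodicity in Definition \ref{def_ergodicity} refers to a choice of $\nu$ with $\mu = \pi_*\nu$, I will prove that \emph{some} such $\nu$ can be chosen ergodic. Here "extremal invariant measure" means an extreme point of the convex set $\mathcal{M}(X)$ of invariant measures of $S$ on $X$.

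First I set up the fiber over $\mu$. Let $\mathcal{M}_\sigma(S)$ be the set of shift-invariant Borel probability measures on $S$. Because $X$ is a compact metric space, $C(\mathbb{R},X)$ with the compact-open topology is metrizable, so $S$ is a compact metrizable space. Hence $\mathcal{M}_\sigma(S)$ is a nonempty, weak$^*$ compact, convex set. The push-forward $\pi_*$ is affine, and it is weak$^*$ continuous because $\pi$ is continuous. Therefore the fiber
\[
 \mathcal{F}_\mu := \{\nu \in \mathcal{M}_\sigma(S) \mid \pi_*\nu = \mu\}
\]
is a closed, hence compact, convex subset of $\mathcal{M}_\sigma(S)$. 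It is nonempty because $\mu$ is invariant. By the Krein--Milman theorem, $\mathcal{F}_\mu$ has an extreme point $\nu$.

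Next I check that $\nu$ is extreme in all of $\mathcal{M}_\sigma(S)$. Suppose $\nu = s\nu_1 + (1-s)\nu_2$ with $\nu_i \in \mathcal{M}_\sigma(S)$ and $0<s<1$. Applying $\pi_*$ gives $\mu = s\pi_*\nu_1 + (1-s)\pi_*\nu_2$. Each $\pi_*\nu_i$ is an invariant measure by definition, so extremality of $\mu$ forces $\pi_*\nu_1 = \pi_*\nu_2 = \mu$. Thus $\nu_1,\nu_2 \in \mathcal{F}_\mu$, and extremality of $\nu$ in $\mathcal{F}_\mu$ gives $\nu_1=\nu_2=\nu$.

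Finally I invoke the classical fact that an extreme point of the set of invariant measures of a continuous flow on a compact metric space is ergodic. For completeness, the argument runs as follows. If $W\subset S$ is shift-invariant with $0<\nu(W)<1$, then $\nu = \nu(W)\,\nu(\cdot\mid W) + \nu(S\setminus W)\,\nu(\cdot \mid S\setminus W)$. Both conditional measures are shift-invariant and distinct, which contradicts extremality. Hence $\nu$ is ergodic with $\pi_*\nu=\mu$, so $\mu$ is finely ergodic.

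I expect the main point requiring care to be the compactness of $\mathcal{F}_\mu$, which is what Krein--Milman needs. This rests on metrizability of $S$ and continuity of $\pi_*$, both routine here. The rest is a formal convexity argument.
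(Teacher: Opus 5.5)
Your proof is correct. It follows the paper's outline (lift extremality to $S$, then use the classical fact that extreme invariant measures are ergodic), but it differs at the one step that matters.

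The paper takes an \emph{arbitrary} lift $\nu$ with $\pi_*\nu=\mu$ and writes $\nu=\alpha\nu_1+(1-\alpha)\nu_2$. It then asserts that applying $\pi_*$ shows $\nu$ is extremal. Applying $\pi_*$ only gives $\pi_*\nu_1=\pi_*\nu_2=\mu$, i.e.\ $\nu_1,\nu_2\in\mathcal{F}_\mu$. That forces $\nu_1=\nu_2=\nu$ only when the fiber is a single point. Your extra step closes this gap: $\mathcal{F}_\mu$ is a compact face of $\mathcal{M}_\sigma(S)$ when $\mu$ is extremal, and you choose $\nu$ to be an extreme point of it via Krein--Milman.

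Reading fine ergodicity as ``some lift of $\mu$ is ergodic'' is also necessary, not just convenient. Take $X=\mathbb{R}/2\pi\mathbb{Z}$ and let $S$ consist of all unit-speed rotations in both directions, $t\mapsto\theta\pm t$. This $S$ is the disjoint union of two compact orbit-circles, each uniquely ergodic. So the only shift-invariant measures are convex combinations of the two Haar measures $\nu_+,\nu_-$, and both push forward to normalized Lebesgue measure $\lambda$. Hence $\lambda$ is the unique, and therefore extremal, invariant measure. Yet the lift $\tfrac12(\nu_++\nu_-)$ is not ergodic, so the ``every lift'' version of the theorem is false.

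The cost of your route is the compactness of $\mathcal{F}_\mu$, which needs metrizability of $S$ and weak$^*$ continuity of $\pi_*$. You handle both correctly.
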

\begin{proof}
Let $\mu$ be an extremal invariant measure of $S$ with $\mu = \pi_* \nu$, where $\nu$ is a shift invariant measure. 
It suffices to show that  $\nu$ is extremal because extremal invariant measures are ergodic in the classical setting. Let $\nu = \alpha \nu_1 + (1-\alpha)\nu_2$, where $\alpha \in [0,1]$ and $\nu_1, \nu_2$ are shift invariant. Then, by applying $\pi_*$, it is obvious that $\nu$ is extremal as well. 
\end{proof}
A useful characterization of ergodicity is that invariant functions are constant almost everywhere. The following results generalize this property.
\begin{theorem}
Let $\mu$ be an invariant measure of $S$. Then, the following are equivalent.
\begin{enumerate}
 \item $\mu$ is ergodic.
 \item A measurable $\{0,1\}$-valued function is $\mu$-almost everywhere constant if it is non-increasing along trajectories in $S$.
\end{enumerate}
\end{theorem}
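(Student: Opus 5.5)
The plan is to translate between $\{0,1\}$-valued functions and sets through indicator functions. Under this translation, ``non-increasing along trajectories'' should correspond exactly to strong backward invariance. The equivalence then reduces to the definition of ergodicity (Definition \ref{def_ergodicity}).

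First, I would fix the meaning of the condition in (2). A function $f: X \to \{0,1\}$ is non-increasing along trajectories in $S$ if $f(\phi(s)) \geq f(\phi(t))$ for every $\phi \in S$ and all $s \leq t$. The key observation is then the following: for a set $A \subset X$, the indicator $\mathbf{1}_A$ is non-increasing along trajectories if and only if $A$ is backward strongly invariant.

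To prove the observation in one direction, suppose $\mathbf{1}_A$ is non-increasing. Take $x \in A$, a trajectory $\phi \in S$ with $\phi(0) = x$, and $t \leq 0$. Then $\mathbf{1}_A(\phi(t)) \geq \mathbf{1}_A(\phi(0)) = 1$, so $\phi(t) \in A$.

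For the other direction, suppose $A$ is backward strongly invariant. Take $s \leq t$ and suppose $\mathbf{1}_A(\phi(t)) = 1$. Apply the definition to the shifted trajectory $\sigma(t,\phi) \in S$, which is available by shift-invariance of $S$. Since $\sigma(t,\phi)(0) = \phi(t) \in A$, we get $\sigma(t,\phi)(s-t) = \phi(s) \in A$, because $s - t \leq 0$. Hence $\mathbf{1}_A(\phi(s)) = 1$. Equivalently, Lemma \ref{lem_bkwdsubset} can be used here.

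Every $\{0,1\}$-valued measurable function is of the form $f = \mathbf{1}_A$ with $A = f^{-1}(1)$ measurable. Moreover, $f$ is $\mu$-a.e.\ constant exactly when $\mu(A) \in \{0,1\}$.

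With this dictionary, both implications are immediate.

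For (1)$\Rightarrow$(2), let $f$ be measurable, $\{0,1\}$-valued and non-increasing along trajectories. Then $A = f^{-1}(1)$ is a measurable, backward strongly invariant set. Ergodicity gives $\mu(A) \in \{0,1\}$, so $f$ is a.e.\ constant.

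For (2)$\Rightarrow$(1), let $A$ be backward strongly invariant. Then $\mathbf{1}_A$ is non-increasing along trajectories, so by (2) it is a.e.\ constant, i.e.\ $\mu(A) \in \{0,1\}$.

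The only real obstacle is measurability. Definition \ref{def_ergodicity} quantifies over backward strongly invariant sets, implicitly meaning those that are $\mu$-measurable, or Borel. I would state explicitly that in both directions we restrict to measurable $A$, which matches the measurability hypothesis in (2). With that convention the correspondence is exact.

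If one prefers a version of (2) with ``non-increasing $\mu$-a.e.'', one would need Lemma \ref{lem_bkwdsame} to modify sets on null sets. I would not pursue that here, since the statement requires monotonicity along every trajectory.
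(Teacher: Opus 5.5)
Your proposal is correct and matches the paper's proof: in both directions you identify a measurable $\{0,1\}$-valued function that is non-increasing along trajectories with the indicator of a backward strongly invariant set, then apply Definition \ref{def_ergodicity}. Your check of this correspondence via shift-invariance, and your remark that $A$ must be measurable, spell out details the paper leaves implicit.
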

\begin{proof}
(1) $\Rightarrow$ (2): Let $f:X \to \{0,1\}$ be measurable and non-increasing along trajectories in $S$. Then, $A :=\{x \mid f(x) = 1\}$ is strongly backward invariant and therefore $f$ is constant $\mu$-almost everywhere.

(2) $\Rightarrow$ (1): Let $A$ be strongly backward invariant. Then, $\chi_A$ is non-increasing along trajectories in $S$ and therefore constant $\mu$-almost everywhere. This implies either $\mu(A) =0 $ or $\mu(A) =1 $.
\end{proof}
The next result establishes the topological transitivity on the support of the ergodic measure.
\begin{theorem}
If $S$ satisfies the switching axiom and $\mu$ is ergodic, $\mu$-almost every point $x$ has a forward orbit dense in the support of $\mu$. 
\end{theorem}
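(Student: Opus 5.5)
We will follow the classical argument (Proposition 4.2.2 in \cite{brin2002introduction}), with strong backward invariance playing the role of invariance. First we fix the meaning of ``forward orbit'': for $x\in X$ set $O^+(x) := \bigcup_{t\ge 0} V_S(t)\{x\}$, the set of points reachable from $x$ along some trajectory in $S$. Since $X$ is a compact metric space, the support of $\mu$ is separable. Choose a countable base $\{U_n\}$ of open sets of $X$ meeting $\mathrm{supp}\,\mu$. Then every $U_n$ satisfies $\mu(U_n)>0$.

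For each $n$ we consider the set of points that can never reach $U_n$:
\[
 A_n := \{x\in X \mid O^+(x)\cap U_n = \emptyset\} = X\backslash \bigcup_{t\ge 0} V_S(t)^{-1}U_n .
\]
The key step is to show that $A_n$ is strongly backward invariant. By Lemma \ref{lem_bkwdsubset}, it is equivalent to show that its complement $B_n:=\bigcup_{t\ge0}V_S(t)^{-1}U_n$ is strongly forward invariant. Equivalently, if $y\in V_S(s)^{-1}A_n$ then $y\in A_n$. Suppose instead that $\phi(0)=y$, $\phi(s)=x\in A_n$, and that some $\psi\in S$ with $\psi(0)=y$ reaches $U_n$ at time $t$.

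This is where the switching axiom enters, and it is the main obstacle. We need to produce a trajectory from $x$ into $U_n$. Concatenating in the wrong direction does not obviously help, so we should instead define the target set so that backward invariance is automatic. Take
\[
 B_n := \{x \mid \text{every } \phi\in S \text{ with } \phi(0)=x \text{ hits } U_n \text{ at some time } \ge 0\}\cup \ldots
\]
A cleaner choice is $C_n := \bigcup_{t\ge0}V_S(t)U_n$, the forward reachable set from $U_n$, with complement $A_n := X\backslash C_n$. We check backward strong invariance of $A_n$ directly. Let $x\in A_n$, $\phi(0)=x$ and $s\le 0$. If $\phi(s)\in C_n$, there is $\psi$ with $\psi(0)\in U_n$ and $\psi(t)=\phi(s)$ for some $t\ge0$. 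By the switching axiom, $\phi\cdot\psi\in S$ follows $\psi$ up to time $t$ and then continues along $\phi$, reaching $x$ at time $t-s\ge 0$. Hence $x\in C_n$, a contradiction. So $A_n$ is strongly backward invariant, and switching is used exactly here.

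Next, by ergodicity $\mu(A_n)\in\{0,1\}$. Since $\mu(U_n)>0$ and $U_n\subset C_n$, we get $\mu(C_n)=1$ for all $n$, so $G:=\mathrm{supp}\,\mu\cap\bigcap_n C_n$ has full measure. However, $C_n$ describes points reachable \emph{from} $U_n$, not points whose orbit enters $U_n$. To convert this, we pass to $S$ and use $\nu$. The set $\pi^{-1}U_n$ has positive $\nu$-measure, and by Poincar\'e recurrence applied to the shift flow, $\nu$-a.e.\ $\phi\in\pi^{-1}(C_n)$ has a past time $s\le0$ with $\phi(s)\in U_n$. We will then combine this with the switching axiom again: for $\mu$-a.e.\ $x$ and each $n,m$, we pick $\phi$ through $x$ whose past visits $U_m$. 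Then we choose $z\in U_m$ and use $z$'s membership in $C_n$-type sets, or the full-measure set $\bigcap_n C_n$ intersected with $U_m$, to splice a forward path from $x$ into $U_n$.

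The most delicate point is getting the direction right. Ergodicity gives that almost every point is reachable from every $U_n$, and we must upgrade this to almost every point reaching every $U_n$. Our first attempt will use the invariance $\mu(V_S(t)^{-1}A)\ge\mu(A)$ together with recurrence of $\nu$. If that fails, we will instead apply ergodicity to the strongly backward invariant sets $X\backslash C$ for $C=\bigcup_{t\ge0}V_S(t)\bigl(\bigcup_{t\ge0}V_S(t)^{-1}U_n\bigr)$, which is forward closed by switching.
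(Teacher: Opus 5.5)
There is a genuine gap. The argument you actually complete proves the time-reversed statement, and the step meant to reverse it does not work.

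The set $C_n=\bigcup_{t\ge0}V_S(t)U_n$ is indeed strongly forward invariant under switching, so $\mu(C_n)=1$. But this only says that $\mu$-a.e.\ point is \emph{reachable from} every $U_n$, i.e.\ its backward orbit is dense.

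Your conversion via Poincar\'e recurrence is unjustified. Recurrence only concerns $\nu$-a.e.\ $\phi\in\pi^{-1}U_n$. For $\phi\in\pi^{-1}(C_n)$, the fact that $\phi(0)$ is reachable from $U_n$ is witnessed by some \emph{other} trajectory, so nothing forces $\phi$ itself to visit $U_n$. Concluding that it does would need ergodicity of $\nu$, which is not assumed. For instance, take a figure-eight whose trajectories may switch loops at the crossing, and let $\nu$ be the average of the invariant measures carried by the two loop orbits. Then $\mu=\pi_*\nu$ is ergodic, since the only strongly backward invariant sets are $\emptyset$ and $X$. For an arc $U_n$ on the first loop, $C_n=X$, yet half the $\nu$-mass consists of trajectories that never visit $U_n$.

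The splicing step has the same problem. Membership of $z\in U_m$ in $C_n$ gives paths \emph{into} $z$, never a path \emph{out of} $x$ into $U_n$. Your fallback fails similarly: a point of $\bigcup_{t\ge0}V_S(t)W_n$ is merely reachable from a point that can reach $U_n$, which does not mean it can reach $U_n$ itself.

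The missing idea is in your very first attempt, but you tested the wrong set. It is $W_n:=\bigcup_{t\ge0}V_S(t)^{-1}U_n$ itself, not its complement, that is strongly backward invariant. Testing the complement asked for forward strong invariance of $W_n$, which is false in general.

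To see that $W_n$ is strongly backward invariant, suppose $\psi(0)=x$ with $\psi(t)\in U_n$, and let $\phi\in S$ with $\phi(0)=x$ and $s\le0$. By switching, the trajectory that follows $\phi$ up to time $0$ and then $\psi$ lies in $S$. Its shift by $s$ starts at $\phi(s)$ and is in $U_n$ at time $t-s\ge0$, hence $\phi(s)\in W_n$.

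Since $U_n\subset W_n$ and $\mu(U_n)>0$, ergodicity gives $\mu(W_n)=1$. Every $x\in\bigcap_n W_n$ then has $O^+(x)$ meeting every $U_n$. This is exactly the first step of the paper's proof, and with your definition of $O^+(x)$ it already finishes the argument. The paper goes on to iterate the same device, applying it to sets like $W_2\cap U_1$, to get a single trajectory visiting $U_1,\dots,U_n$ in order.
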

\begin{proof}
Let $U$ be an open subset of the support of $\mu$. Then, $W = \cup_{t \geq 0} V_S(t)^{-1} U$ is backward strongly invariant by the switching axiom. Therefore, $\mu(W) = 1$ and almost every point has a forward trajectory visiting $U$. 

Let $\{U_n\}_n$ be a countable basis of the topology of the support of $\mu$. We define \[A_n := \{x \mid \exists \phi \in S \text{ such that } \phi(t_j) \in U_j \text{ for } 1 \leq j \leq n \text{ with } t_j > 0\}.\] 
By the construction above, $\mu(A_1) = 1$.
Let $W_2 =  \cup_{t \geq 0} V_S(t)^{-1} U_2$. As $\mu(W_2 \cap U_1) = \mu(U_1) > 0$, $\cup_{t \geq 0} V_S(t)^{-1}\qty(W_2 \cap U_1)$ is a backward strongly invariant set with positive measure. This has full measure, and consequently almost every point has a trajectory that first visits $U_1$ and then $U_2$. Therefore,  $\mu(A_2) = 1$. 

Similarly, the set
\[
 \bigcup_{t_1 \geq 0}V(t_{1})^{-1}\qty( \bigcup_{t_2 \geq 0} V(t_{2})^{-1}\qty( \cdots \bigcup_{t_{n-1} \geq 0} V(t_{n-1})^{-1} (W_n \cap U_{n-1}) \cdots \cap U_2 )\cap U_1)
\]
 has full measure for all $n$. Therefore $\mu(A_n) = 1$ for all $n$. Since $A_{n+1} \subset A_n$, $\mu(\cap_n A_n) =1$. This implies that $\mu$-almost every point $x$ has a forward orbit dense in the support of $\mu$.
\end{proof}
One way to find an ergodic measure is to consider weakly invariant subsets with a flow residing on them.
\begin{definition}
Let $S$ be a shift-invariant subset. For a weakly invariant subset $A \subset X$, a flow $\Phi$ on $A$ is a \textbf{subflow} of $S$ if $\{\Phi_x \mid x \in A\} \subset S$.
\end{definition}
\begin{lemma}
Let $S$ be a compact shift-invariant subset with switching axiom and $A \subset X$ be a weakly invariant subset of $X$ with a subflow $\Phi$. If $\mu_0$ is an invariant measure of $\Phi$, $\mu := (j_A)_*\mu_0$ defines an invariant measure, where $j_A: A \to X$ is the inclusion.
 \end{lemma}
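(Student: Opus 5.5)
The plan is to produce the shift-invariant measure $\nu$ on $S$ required by the definition of an invariant measure by pushing $\mu_0$ forward along the map that sends a point to its subflow trajectory. Define
\[
 \Psi: A \to S, \qquad \Psi(x) := \Phi_x = \Phi(-,x),
\]
which lands in $S$ by the definition of a subflow. Set $\nu := \Psi_*\mu_0$, regarded as a Borel probability measure on $S$. It then remains to check three things: $\Psi$ is measurable, $\nu$ is shift invariant, and $\pi_*\nu = (j_A)_*\mu_0$.

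For measurability, I would use that $\Psi$ is continuous from $A$ into $C(\mathbb{R},X)$ with the compact-open topology. Because $\Phi$ is jointly continuous, uniform continuity on compact sets of the form $[-T,T]\times K$ gives uniform convergence $\Phi(s,x_n)\to\Phi(s,x)$ on compact time intervals whenever $x_n \to x$. Here $\mu_0$ is a Borel measure on $A$ with its subspace topology, so $\nu$ is a Borel measure on $S$. If $A$ is not closed, I would treat $\mu_0$ as a measure on the Borel sets of $A$; these are traces of Borel sets of $X$, so $(j_A)_*\mu_0$ is well defined.

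For shift invariance, the flow property gives
\[
 \sigma(t,\Psi(x))(s) = \Phi(s+t,x) = \Phi(s,\Phi(t,x)) = \Psi(\Phi^t x)(s),
\]
so $\sigma^t \circ \Psi = \Psi \circ \Phi^t$. Hence $\sigma^t_*\nu = \Psi_*\Phi^t_*\mu_0 = \Psi_*\mu_0 = \nu$, using the $\Phi$-invariance of $\mu_0$. This holds for all $t \ge 0$, and for $t<0$ as well by invertibility. For the projection, $\pi\circ\Psi(x) = \Phi(0,x) = x$, so $\pi\circ\Psi = j_A$ and therefore $\pi_*\nu = (j_A)_*\mu_0 = \mu$. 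Thus $\mu$ is invariant in the sense of the definition.

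The only delicate point is the measure-theoretic bookkeeping when $A$ is not Borel or closed in $X$; I expect this to be routine. If needed, I would replace $A$ by the closure of $\Psi(A)$ inside the compact set $S$. I note that the switching axiom does not appear to be needed for this argument.
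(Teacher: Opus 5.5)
Your proof is correct, but it takes a different route from the paper.

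\textbf{The paper's route.} The paper never constructs the lift $\nu$. It invokes the characterization of invariant measures by the non-decreasing property $\mu(V_S(t)^{-1}B)\ge\mu(B)$ (Theorem 5.12 of \cite{SUDA20241}), which is where the switching axiom is used. It then only checks this inequality, using the inclusion $(\Phi^t)^{-1}(B\cap A)\subset V_S(t)^{-1}B$ and the $\Phi$-invariance of $\mu_0$.

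\textbf{Your route.} You build $\nu=\Psi_*\mu_0$ explicitly. You get shift invariance from the intertwining relation $\sigma^t\circ\Psi=\Psi\circ\Phi^t$, and the identity $\pi\circ\Psi=j_A$ gives $\pi_*\nu=\mu$. That is exactly what the definition of an invariant measure asks for.

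\textbf{Comparison.} Once the cited theorem is available, the paper's argument is a two-line computation on $X$, but it depends on that external result and on the switching axiom. Yours is self-contained, and you are right that the switching axiom is superfluous for this lemma.

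\textbf{A bonus from your construction.} It also strengthens the next theorem. For any shift-invariant Borel $W\subset S$, the intertwining relation gives $(\Phi^t)^{-1}\Psi^{-1}W=\Psi^{-1}W$. So if $\mu_0$ is ergodic, $\nu(W)=\mu_0(\Psi^{-1}W)\in\{0,1\}$. Hence $\nu$ is ergodic, i.e.\ $\mu$ is finely ergodic and therefore ergodic, again without the switching axiom.

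\textbf{A minor point.} Your fallback of replacing $A$ by the closure of $\Psi(A)$ is unnecessary. Continuity of $\Psi$ already makes $\nu$ a Borel measure on $S$, and $(j_A)_*\mu_0(B)=\mu_0(B\cap A)$ is defined for every Borel $B\subset X$.
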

 \begin{proof}
 In this case, we can use the characterization of invariant measure by the non-decreasing property by Theorem 5.12 in \cite{SUDA20241}. Let $B$ be a Borel subset. For all $t \geq 0$, we have $\qty(\Phi^t)^{-1} (B\cap A) \subset V_S(t)^{-1} B$. Therefore,
 \[
 \begin{aligned}
  \mu\qty( V_S(t)^{-1} B ) &\geq \mu\qty( \qty(\Phi^t)^{-1} (B\cap A) ) \\
 &= \mu_0\qty( \qty(\Phi^t)^{-1} (B\cap A) )\\
 & =  \mu_0\qty(B\cap A ) = \mu(B)
 \end{aligned}
 \]
 for all $t \geq 0$.
 \end{proof}
 \begin{theorem}
Let $S$ be a compact shift-invariant subset with switching axiom and $A \subset X$ be a weakly invariant subset of $X$ with a subflow $\Phi$. If $\mu_0$ is an ergodic invariant measure of $\Phi$, then $\mu := (1_A)_*\mu_0$ is ergodic.
 \end{theorem}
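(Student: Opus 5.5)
By the preceding lemma, $\mu=(j_A)_*\mu_0$ is already an invariant measure of $S$; here $(1_A)_*$ in the statement is read as the pushforward by the inclusion $j_A$. So only the zero-one property of Definition \ref{def_ergodicity} needs checking. Let $B\subset X$ be backward strongly invariant. The plan is to show that $B\cap A$ is, up to the measure $\mu_0$, a backward invariant set of the flow $\Phi$ on $A$. Then Lemma \ref{lem_bkwd}, applied to the ergodic measure $\mu_0$, gives $\mu_0(B\cap A)\in\{0,1\}$. Since $\mu(B)=\mu_0(B\cap A)$, this is exactly what we need.

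The key step is the inclusion
\[
 \qty(\Phi^t)^{-1}(B\cap A)\subset B\cap A, \qquad t\ge 0.
\]
Take $x\in A$ with $\Phi^t x\in B$. The trajectory $\Phi_x=\Phi(-,x)$ belongs to $S$ by the definition of a subflow. It satisfies $\Phi_x(0)=x$ and $\Phi_x(t)\in B$, so $x\in V_S(t)^{-1}B$. Lemma \ref{lem_bkwdsubset} then gives $x\in B$. Also $x\in A$, because $\Phi$ is a flow on $A$. Hence $B\cap A$ is backward invariant for $\Phi$ in the sense of the flow definition, with no exceptional set at all. If $\mu_0(B\cap A)>0$, Lemma \ref{lem_bkwd} yields $\mu_0(B\cap A)=1$, and so $\mu(B)=1$; otherwise $\mu(B)=0$.

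The step I expect to need care is measurability and the interpretation of the setting. Backward strongly invariant sets are presumably meant to be Borel, since $\mu$ must be able to evaluate them, and then $B\cap A$ is Borel in $A$. If $A$ is not closed, I would regard $\mu_0$ as a Borel measure on $A$ with its subspace topology, so that $j_A$ is Borel measurable. The switching axiom does not seem to be needed in this direction beyond its use in the preceding lemma to guarantee that $\mu$ is invariant. I would note this explicitly, but keep the hypothesis so the statement stays aligned with that lemma.
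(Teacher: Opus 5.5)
Your proof is correct and is the paper's argument: the paper also shows that $A\cap B$ is backward $\Phi$-invariant and then applies the ergodicity of $\mu_0$, using $\mu(B)=\mu_0(A\cap B)$. You add the details the paper leaves implicit, namely that subflow trajectories lie in $S$, so Lemma \ref{lem_bkwdsubset} gives the inclusion, and that Lemma \ref{lem_bkwd} is what lets you apply ergodicity of $\mu_0$ to a merely backward invariant set.
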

 \begin{proof}
 Let $B$ be a backward strongly invariant set with $\mu(B) > 0$. Then, $A \cap B$ is backward $\Phi$ invariant. Therefore we have
 \[
  \mu(B) = \mu_0(A\cap B) = 1
 \]
 by ergodicity of $\mu_0$.
 \end{proof}
 \begin{example}
 The half-disk system has an ergodic invariant measure with support on the boundary of the domain. Obviously, it has a subflow on the boundary of the domain consisting only of a periodic orbit. Thus, there is an ergodic invariant measure with support on that orbit.
 \end{example}
\section{ Birkhoff ergodic theorem and mixing properties}\label{section_birkhoff}
In this section, we consider consequences of the Birkhoff ergodic theorem and mixing properties of ergodic systems. Since generalized dynamical systems are shift-invariant subsets of $C(\mathbb{R}, X)$, we can apply the classical Birkhoff ergodic theorem and consider its implications.

First, a direct application of the Birkhoff ergodic theorem yields:
\begin{theorem}
Let $S \subset C(\mathbb{R}, X)$ be a compact shift-invariant subset. Let $\mu$ be an invariant measure with a shift invariant measure $\nu$ such that $\mu = (\pi)_*\nu$. Then, for $\nu$-a.e. $\phi \in S$ and every $f \in L^1(\mu)$,
\[
 \bar f(\phi) := \lim_{\tau \to \infty} \frac{1}{\tau} \int_0^\tau f(\phi(t)) \mathrm{d} t
\]
exists. Further, we have
\[
 \int_S \bar f(\phi) \nu(\mathrm{d} \phi) = \int_X f(x) \mu(\mathrm{d} x). 
\]

\end{theorem}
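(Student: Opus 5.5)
The plan is to transfer $f$ to the trajectory space and apply the classical Birkhoff ergodic theorem for the continuous flow $(\sigma, S)$ with the shift-invariant measure $\nu$. Given $f \in L^1(\mu)$, set $F := f \circ \pi : S \to \mathbb{R}$. Since $\pi$ is continuous (evaluation at $0$ is continuous in the compact-open topology), $F$ is Borel measurable. By the change of variables formula for push-forward measures,
\[
 \int_S |F| \, \mathrm{d}\nu = \int_X |f| \, \mathrm{d}(\pi_*\nu) = \int_X |f| \, \mathrm{d}\mu < \infty,
\]
so $F \in L^1(\nu)$. Also, by the definition of the shift, $F(\sigma(t,\phi)) = f(\sigma(t,\phi)(0)) = f(\phi(t))$. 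Hence the time average in the statement is exactly the Birkhoff average $\frac{1}{\tau}\int_0^\tau F(\sigma(t,\phi))\,\mathrm{d}t$ of $F$ along the $\sigma$-orbit of $\phi$.

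I then invoke the Birkhoff ergodic theorem for continuous-time measure-preserving flows on the probability space $(S,\nu)$. Its hypotheses hold: joint measurability of $(t,\phi)\mapsto F(\sigma(t,\phi))$ follows from continuity of $\sigma$, and measurability of $F$ follows from the previous paragraph. The theorem gives a set of full $\nu$-measure on which $\lim_{\tau\to\infty}\frac1\tau\int_0^\tau F(\sigma(t,\phi))\,\mathrm{d}t = \bar F(\phi)$ exists, with $\bar F$ $\sigma$-invariant, in $L^1(\nu)$, and satisfying $\int_S \bar F\,\mathrm{d}\nu = \int_S F\,\mathrm{d}\nu$. Since $\bar f = \bar F$, the push-forward identity from the first step yields $\int_S \bar f\,\mathrm{d}\nu = \int_X f\,\mathrm{d}\mu$.

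The only delicate point is the quantifier order. As written, the statement says ``for $\nu$-a.e.\ $\phi$ and every $f$'', which suggests a single null set working for all $f$ simultaneously. Birkhoff only provides a null set depending on $f$. I would read the statement as ``for every $f$, for $\nu$-a.e.\ $\phi$'', which is the standard meaning. If a uniform null set is wanted, I would restrict to $f \in C(X)$: take a countable dense subset of $C(X)$ (which exists since $X$ is compact metric), intersect the corresponding countably many full-measure sets, and extend to all of $C(X)$ by uniform approximation. For general $L^1$ functions the uniform version cannot be expected, since $f$ is only defined $\mu$-a.e. So I would state the result with the $f$-dependent null set.
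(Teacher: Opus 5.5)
Your proposal is correct and is exactly what the paper means by ``a direct application of the Birkhoff ergodic theorem'': you pull $f$ back to $F = f\circ\pi \in L^1(\nu)$, observe that $F(\sigma(t,\phi)) = f(\phi(t))$, apply the continuous-time Birkhoff theorem to the flow $(\sigma, S, \nu)$, and use $\int_S F\,\mathrm{d}\nu = \int_X f\,\mathrm{d}\mu$. Your reading of the quantifiers is also right, though the paper does not spell it out: the null set depends on $f$, and a single null set serving all $f \in C(X)$ can be obtained from a countable dense subset.
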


In general, we cannot expect $\bar{f}(\phi)$ to lead to a well-defined function on $X$ due to non-uniqueness of the trajectories. Therefore, it will be of interest to consider the set of all possible values of $\bar{f}$ when we start from a given point $x$ and choose $\phi$.

\begin{proposition}\label{prop_rare}
Let $S \subset C(\mathbb{R}, X)$ be a compact shift-invariant subset satisfying the switching axiom. Let $\mu$ be an invariant measure with a shift invariant measure $\nu$ such that $\mu = (\pi)_*\nu$. Then, for all $f \in L^1(\mu)$ and $c \in \mathbb{R}$, the set
\[
 \pi \qty(\bar{f}^{-1}(c)) := \{x \in X \mid \text{ there exists } \phi \in S \text{ such that } \bar{f}(\phi) = c\}
\]
is backward strongly invariant. In particular, if $\mu$ is ergodic, this has measure either 0 or 1.
\end{proposition}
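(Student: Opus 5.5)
By Lemma \ref{lem_bkwdsubset}, it suffices to show that $V_S(t)^{-1} E \subset E$ for all $t \geq 0$, where $E := \pi\qty(\bar f^{-1}(c))$. I read $\bar f^{-1}(c)$ as the set of $\phi \in S$ for which the limit defining $\bar f(\phi)$ exists and equals $c$. Fix $t \geq 0$ and $x \in V_S(t)^{-1}E$. By definition there is $\psi \in S$ with $\psi(0) = x$ and $\psi(t) = y \in E$. By the definition of $E$ there is also $\phi \in S$ with $\phi(0) = y$ and $\bar f(\phi) = c$. Since $\psi(t) = \phi(0)$, the switching axiom (with $t' = 0$) gives the concatenated orbit $\eta := \phi\cdot\psi \in S$. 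Explicitly, $\eta(s) = \psi(s)$ for $s \leq t$ and $\eta(s) = \phi(s-t)$ for $s \geq t$. Moreover $\eta(0) = \psi(0) = x$.

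The key step is to show $\bar f(\eta) = c$. For $\tau > t$ we split
\[
 \frac{1}{\tau}\int_0^\tau f(\eta(s))\,\mathrm{d}s = \frac{1}{\tau}\int_0^t f(\psi(s))\,\mathrm{d}s + \frac{\tau - t}{\tau}\cdot\frac{1}{\tau-t}\int_0^{\tau-t} f(\phi(u))\,\mathrm{d}u .
\]
The second term tends to $\bar f(\phi) = c$. The first term tends to $0$ provided $\int_0^t |f(\psi(s))|\,\mathrm{d}s$ is finite. Granting this, $\bar f(\eta)$ exists and equals $c$, so $x = \pi(\eta) \in E$. This proves $V_S(t)^{-1}E \subset E$ and hence backward strong invariance. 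The ``in particular'' clause then follows directly from Definition \ref{def_ergodicity}. If measurability of $E$ is a concern, I would either note that only $\mu$-measurability is needed, or interpret $\mu(E)$ via outer measure; the proposition as stated implicitly assumes $E$ is measurable.

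The main obstacle is the finite-time integral $\int_0^t f(\psi(s))\,\mathrm{d}s$. For a general $f \in L^1(\mu)$ it need not be finite, or even well defined, along an arbitrary trajectory $\psi$, and $f$ is only defined $\mu$-a.e. My first approach is to use the fact that for bounded measurable $f$ (in particular continuous $f$ on compact $X$) the term is at most $t\|f\|_\infty/\tau \to 0$, so the argument above is complete in that case. For general $f \in L^1(\mu)$, I would interpret $\bar f(\phi)$, as the definition of $\pi(\bar f^{-1}(c))$ implicitly does, as the limit along those $\phi$ for which $s \mapsto f(\phi(s))$ is locally integrable. I would then restrict attention to trajectories $\psi$ with this property. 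If needed, I would note that $\nu$-almost every trajectory has locally integrable $f\circ\phi$, by Fubini and shift invariance, since $\int_S \int_0^t |f(\phi(s))|\,\mathrm{d}s\,\nu(\mathrm{d}\phi) = t\|f\|_{L^1(\mu)}$. I would state this convention explicitly.
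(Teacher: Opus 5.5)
Your core argument is the paper's: use the switching axiom to glue a trajectory onto a witness $\phi$ with $\bar f(\phi)=c$, then note that a finite initial segment does not change the Ces\`aro limit. The paper glues at time $0$ to an arbitrary $\psi$ through a point of $E$ and implicitly shifts by $t<0$. You glue at time $t$ and use Lemma \ref{lem_bkwdsubset}; it is the same argument. You are more careful than the paper, which asserts $\bar f(\phi\cdot\psi)=c$ without controlling $\frac1\tau\int_0^t f(\psi(s))\,\mathrm{d}s$. For bounded Borel $f$ your proof is complete. Measurability also needs no outer-measure device there: the limit can be taken along integer $\tau$, so $\bar f^{-1}(c)$ is Borel in the compact metrizable space $S$. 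Hence $E=\pi(\bar f^{-1}(c))$ is analytic, and therefore universally measurable.

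The gap is your fallback for unbounded $f\in L^1(\mu)$. Backward strong invariance, equivalently $V_S(t)^{-1}E\subset E$, quantifies over \emph{every} $\psi\in S$. Restricting to trajectories with $f\circ\psi\in L^1_{\mathrm{loc}}$ therefore proves a strictly weaker property. Since Definition \ref{def_ergodicity} uses sets that are backward strongly invariant for all of $S$, you would also lose the ``in particular'' clause. The Fubini remark does not help: the witness $\psi$ for $x\in V_S(t)^{-1}E$ is arbitrary and may lie in the $\nu$-null exceptional set.

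In fact the first assertion is false as stated for such $f$. Take $X=[0,1]$ and $S=S_\Phi$ for the flow of $\dot x=x(1-x)$; uniqueness makes the switching axiom trivial. Let $\mu=\delta_1$, $c=2$, and $f(x)=|x-\tfrac12|^{-1}$ with $f(\tfrac12):=0$, so $f\in L^1(\mu)$. Orbits starting in $(\tfrac12,1]$ have $\bar f=2$, so $(\tfrac12,1]\subset E$. Along the orbit $\phi$ through $\tfrac12$, the bound $\dot x\le\tfrac14$ gives $f(\phi(s))\ge 4/s$ for $s>0$. So $\int_0^\tau f(\phi(s))\,\mathrm{d}s=\infty$ and $\tfrac12\notin E$, even though $\tfrac12\in V_S(1)^{-1}E$.

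The repair must restrict the functions or redefine $\bar f$, not restrict the trajectories. One option is bounded Borel $f$, or more generally $f$ with $f\circ\psi$ locally integrable for every $\psi\in S$. Another is to require $\frac1\tau\int_T^{T+\tau}f(\phi(s))\,\mathrm{d}s\to c$ for some $T\ge 0$. This agrees with the paper's $\bar f$ for $\nu$-a.e.\ $\phi$ and is unaffected by gluing. The paper's own proof has the same hole, and your bounded-case argument is the correct proof of the repaired statement.
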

\begin{proof}
Let $x\in \pi \qty(\bar{f}^{-1}(c))$ and $\psi \in S$ with $\psi(0) = x$. By assumption, there exists $\phi \in S$ with $\phi(0) = x$ such that $\bar{f}(\phi) = c$. Then, the concatenation $\phi \cdot \psi$ of $\phi$ after $\psi$ is contained in $S$ by the switching axiom and we have $\bar{f}(\phi \cdot \psi) = c$. Therefore, we have $\phi(t) \in \pi \qty(\bar{f}^{-1}(c))$ for all $t < 0$.
\end{proof}

For finely ergodic invariant measures, we may apply the Birkhoff ergodic theorem to show the validity of the empirical measure. This provides us with a way to obtain information about the system in terms of individual trajectories. 
\begin{corollary}\label{cor_birkhoff}
Let $S \subset C(\mathbb{R}, X)$ be a compact shift-invariant subset. Let $\mu$ be a finely ergodic invariant measure with an ergodic shift invariant measure $\nu$ such that $\mu = (\pi)_*\nu$. Then, for $\nu$-almost every $\phi \in S$ and measurable $A \subset X$, we have
\[
 \lim_{t \to \infty} \frac{1}{t}\int_0^t \chi_A(\phi(s)) \mathrm{d} s = \mu(A).
\] 
\end{corollary}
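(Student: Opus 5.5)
The plan is to apply the classical Birkhoff ergodic theorem for flows to the flow $(\sigma, S)$ with the ergodic shift-invariant measure $\nu$, using the observable $g := \chi_A \circ \pi$ on $S$. Since $\pi$ is continuous, hence Borel measurable, $g$ is a bounded measurable function on $S$, so $g \in L^1(\nu)$. Along the shift we have
\[
 g(\sigma(s,\phi)) = \chi_A\qty(\sigma(s,\phi)(0)) = \chi_A(\phi(s)),
\]
so the time averages of $g$ along $\sigma$-orbits are exactly the averages appearing in the statement.

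By the preceding theorem, the limit $\bar g(\phi)$ exists for $\nu$-a.e. $\phi$, and $\int_S \bar g \, \mathrm{d}\nu = \int_X \chi_A \, \mathrm{d}\mu = \mu(A)$. The remaining point is that $\bar g$ is $\sigma$-invariant wherever it exists: shifting $\phi$ by $r$ changes $\int_0^\tau$ into $\int_r^{\tau+r}$, and the difference is at most $2|r|/\tau \to 0$ because $g$ is bounded. Hence $\bar g$ is an invariant function.

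Ergodicity of $\nu$ (fine ergodicity of $\mu$) then forces $\bar g$ to be $\nu$-a.e. constant. This uses the standard characterization for flows: the sets $\{\bar g > c\}$ are invariant, so each has measure $0$ or $1$. The constant must equal its integral, namely $\mu(A)$, which gives the limit for $\nu$-a.e. $\phi$.

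The main subtlety is the order of quantifiers: ``for $\nu$-a.e. $\phi$ and measurable $A$'' should be read with the null set allowed to depend on $A$, which is exactly what the argument yields. If a single null set for all $A$ were wanted, this fails in general, since one can take $A$ to be the image of a single trajectory. At most, one could obtain a single null set for a countable generating family, such as a countable basis of open sets and their finite unions, and from this deduce weak convergence of the empirical measures. I would state explicitly that the null set depends on $A$; beyond that, the proof is a routine transfer of Birkhoff's theorem through $\pi$.
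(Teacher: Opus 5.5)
Your proposal is correct and matches the paper's intent: the paper gives no separate proof and treats the corollary as a direct application of Birkhoff's theorem to the shift flow $(\sigma,S,\nu)$ with observable $\chi_A\circ\pi$, and you fill in the same argument, using shift-invariance of the limit and ergodicity of $\nu$ to identify it with $\mu(A)$. Your reading of the quantifiers, with the null set depending on $A$, is the correct one, and your orbit-image counterexample shows why a uniform null set cannot be expected in general.
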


Further, we may obtain a weak Birkhoff-type result in terms of the set-valued semigroup $V_S(t)$, removing the reference to the choice of trajectories.
\begin{theorem}
Let $S \subset C(\mathbb{R}, X)$ be a compact shift-invariant subset. Let $\mu$ be a finely ergodic invariant measure with an ergodic shift invariant measure $\nu$ such that $\mu = (\pi)_*\nu$. Then, for all measurable subsets $A$ and $B$ of $X$,
\[
 \liminf_{t\to \infty} \frac{1}{t} \int_{0}^t \mu\qty(V_S(s)^{-1}A \cap B) \mathrm{d}s \geq \mu(A) \mu(B).
\]
\end{theorem}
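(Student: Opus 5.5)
The idea is to lift the problem to the shift space $S$, where $\nu$ is an ergodic invariant measure of the flow $\sigma$, and compare sets in $X$ with their preimages under $\pi$.

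The key pointwise inclusion is
\[
 \pi^{-1}\qty(V_S(s)^{-1}A \cap B) \supset \sigma(s,\cdot)^{-1}\qty(\pi^{-1}A) \cap \pi^{-1}B .
\]
To check it, take $\phi \in S$ with $\phi(s) = \pi(\sigma(s,\phi)) \in A$ and $\phi(0) \in B$. Then $\phi$ itself witnesses $\phi(0) \in V_S(s)^{-1}A$, so $\pi(\phi) \in V_S(s)^{-1}A\cap B$.

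Since $\mu = \pi_*\nu$, this gives
\[
 \mu\qty(V_S(s)^{-1}A\cap B) \geq \nu\qty(\sigma_s^{-1}\tilde A \cap \tilde B),
\]
where $\tilde A = \pi^{-1}A$ and $\tilde B = \pi^{-1}B$. We also have $\nu(\tilde A) = \mu(A)$ and $\nu(\tilde B) = \mu(B)$. Measurability of $V_S(s)^{-1}A$ is a side issue. If it is not Borel, I will read $\mu$ of it as the $\mu$-completion or inner measure. The inequality above only uses the lower bound, so the inner-measure reading still gives exactly what is needed.

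The classical step is that for an ergodic flow-invariant measure $\nu$, the Ces\`aro averages converge:
\[
 \frac1t\int_0^t \nu\qty(\sigma_s^{-1}\tilde A\cap\tilde B)\,\mathrm{d}s \;\longrightarrow\; \nu(\tilde A)\,\nu(\tilde B).
\]
To prove it, write the integrand as $\int_{\tilde B} \chi_{\tilde A}(\sigma_s\phi)\,\nu(\mathrm{d}\phi)$ and use Fubini to exchange the $s$- and $\phi$-integrals. The inner time averages $\frac1t\int_0^t\chi_{\tilde A}(\sigma_s\phi)\,\mathrm{d}s$ converge $\nu$-a.e. to $\nu(\tilde A)$. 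This follows from the Birkhoff theorem stated above, in the form of Corollary \ref{cor_birkhoff} with $f = \chi_A$, because $\chi_{\tilde A}(\sigma_s\phi) = \chi_A(\phi(s))$. These averages are bounded by $1$, so dominated convergence on $\tilde B$ gives the limit $\nu(\tilde A)\nu(\tilde B) = \mu(A)\mu(B)$.

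Combining the two steps, the $\liminf$ of the averages of $\mu(V_S(s)^{-1}A\cap B)$ is at least the limit of the averages of $\nu(\sigma_s^{-1}\tilde A\cap\tilde B)$, which is $\mu(A)\mu(B)$.

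The only delicate point is joint measurability of $(s,\phi)\mapsto \chi_A(\phi(s))$, needed for Fubini. This holds because the evaluation map $(s,\phi)\mapsto\phi(s)$ is continuous on $\mathbb{R}\times S$ in the compact-open topology. Otherwise the argument is routine. Strictness of the inequality is not part of this statement, so I will not address it here; presumably it comes from an example in which $V_S(s)^{-1}A$ is much larger than the projection of $\sigma_s^{-1}\tilde A$.
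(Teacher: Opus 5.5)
Your proof is correct and rests on the same key observation as the paper: $\phi(s)\in A$ forces $\phi(0)\in V_S(s)^{-1}A$, combined with Corollary~\ref{cor_birkhoff} for $f=\chi_A$. The difference is only bookkeeping. You compare measures on $S$ first and use dominated convergence there to get an actual limit $\mu(A)\mu(B)$. The paper instead pushes the Birkhoff lower bound down to a $\mu$-a.e.\ statement on $X$ and then applies Tonelli and Fatou. Your ordering avoids that descent step and the measurability question it raises.
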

\begin{proof}
First, we observe $V_S(s)^{-1} A = \pi \circ \sigma^{-s} \circ \pi^{-1} A.$ This implies \[\chi_{V_S(s)^{-1} A} \circ \pi (\phi)\geq \chi_{\sigma^{-s} \circ \pi^{-1} A} (\phi) = \chi_{ A} (\phi(s))\] for all $\phi \in S$. By integrating both sides of the inequality and applying Corollary \ref{cor_birkhoff}, we have
\[
 \liminf_{t\to \infty} \frac{1}{t}\int_0^t \chi_{V_S(s)^{-1} A} \circ \pi (\phi) \mathrm{d} s \geq \mu(A).
\]
for $\nu$-almost every $\phi$. Therefore, we have
\[
 \liminf_{t\to \infty} \frac{1}{t}\int_0^t \chi_{V_S(s)^{-1} A}  (x) \mathrm{d} s \geq \mu(A).
\]
for $\mu$-almost every $x \in X$.
By Tonelli's theorem and Fatou's lemma, we have
\[
\begin{aligned}
 \liminf_{t\to \infty} \frac{1}{t} \int_{0}^t \mu\qty(V_S(s)^{-1}A \cap B) \mathrm{d}s &= \liminf_{t\to \infty} \frac{1}{t} \int_{0}^t \int \chi_{V_S(s)^{-1}A }(x) \chi_B(x) \mathrm{d} \mu (x)\mathrm{d}s\\
  &= \liminf_{t\to \infty}  \int  \qty(\frac{1}{t} \int_{0}^t   \chi_{V_S(s)^{-1}A }(x)  \mathrm{d}s  )\chi_B(x) \mathrm{d} \mu (x)\\
  &\geq \int  \qty( \liminf_{t\to \infty} \frac{1}{t} \int_{0}^t   \chi_{V_S(s)^{-1}A }(x)  \mathrm{d}s  )\chi_B(x) \mathrm{d} \mu (x)\\
  &\geq \mu(A) \mu(B).
 \end{aligned}
\]
as desired.
\end{proof}
The inequality cannot be improved to an equality, as the next example shows.
\begin{example}
We consider the compact shift-invariant set $\bar{S}$ derived from the half-disk system.  Let $\mu$ be the invariant measure induced by the Dirac measure with support at the constant function $(0,0)$. Let $A$ be a measurable subset such that $(0,0) \not\in A$. It is easy to observe that we have $\mu(A) = 0$ but $\mu\qty(V_S(t)^{-1}A) = 1$ for sufficiently large $t$. Therefore, by taking $B = \{(0,0) \}$, we have
\[
  \liminf_{t\to \infty} \frac{1}{t} \int_{0}^t \mu\qty(V_S(s)^{-1}A \cap B) \mathrm{d}s = 1> 0 = \mu(A) \mu(B) .
\]
\end{example}
Finally, we note that the mixing properties can also be generalized.
\begin{definition}\label{def_wmix}
An invariant measure $\mu$ is \textbf{weakly mixing} if 
\[
 \lim_{t \to \infty} \frac{1}{t}\int_0^t \left|\mu\qty(V_S(t)^{-1} A \cap B) - \mu(A) \mu(B) \right| = 0
\]
for all Borel subsets $A$ and $B$. 
\end{definition}
\begin{theorem}\label{thm_wmix}
If $\mu$ is weakly mixing, then $\mu$ is ergodic.
\end{theorem}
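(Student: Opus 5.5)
The proof will be the classical one: test the weak mixing condition on a backward strongly invariant set paired with itself, and use Lemma \ref{lem_bkwdsame} to turn the preimage back into the set. (The definition writes $V_S(t)^{-1}$ inside $\int_0^t \cdots\,\mathrm{d}s$; I read it as $V_S(s)^{-1}$, as in the preceding theorem. Under the literal reading the integrand is constant in $s$ and the argument below works the same way.)

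Let $A$ be a backward strongly invariant set. We must show $\mu(A)\in\{0,1\}$.

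\textbf{Step 1.} By Lemma \ref{lem_bkwdsubset}, $V_S(s)^{-1}A \subset A$ for every $s\ge 0$. Hence $V_S(s)^{-1}A \cap A = V_S(s)^{-1}A$.

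\textbf{Step 2.} By Lemma \ref{lem_bkwdsame}, $\mu(V_S(s)^{-1}A)=\mu(A)$ for every $s\ge 0$. Combining with Step 1,
\[
\mu\qty(V_S(s)^{-1}A\cap A)=\mu(A)\quad\text{for all } s\ge 0.
\]

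\textbf{Step 3.} Take $B=A$ in the weak mixing condition. The integrand is then the constant $|\mu(A)-\mu(A)^2|$. Its time average is therefore this same constant for every $t>0$. Weak mixing says the average tends to $0$, so $\mu(A)-\mu(A)^2=0$, which gives $\mu(A)\in\{0,1\}$. This is exactly ergodicity in the sense of Definition \ref{def_ergodicity}.

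\textbf{Main obstacle.} The only potential difficulty is measurability of $V_S(s)^{-1}A$, needed for $\mu(V_S(s)^{-1}A)$ to make sense. This is already implicit in the paper's use of Lemma \ref{lem_bkwdsame} and the definition of weak mixing, so I would assume it as the paper does. One could also justify it via $V_S(s)^{-1}A=\pi\circ\sigma^{-s}\circ\pi^{-1}A$: this is a continuous image of a Borel set in a compact metric space, hence analytic and thus universally measurable. For the present argument, however, nothing beyond Lemmas \ref{lem_bkwdsubset} and \ref{lem_bkwdsame} is needed.
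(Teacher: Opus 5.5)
Your proposal is correct and is essentially the paper's proof. The paper also combines Lemmas \ref{lem_bkwdsubset} and \ref{lem_bkwdsame} to get $\mu(V_S(t)^{-1}A \cap A) = \mu(V_S(t)^{-1}A) = \mu(A)$ for backward strongly invariant $A$, and then defers to the classical flow argument, which is exactly your Step 3: taking $B = A$ forces $\mu(A) = \mu(A)^2$, and the $t$ versus $s$ typo in Definition \ref{def_wmix} is harmless because the integrand is constant either way.
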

\begin{proof}
Let $A$ be a backward strongly invariant subset. Then, we have
\[
 \mu\qty(V_S(t)^{-1} A \cap A) = \mu\qty(V_S(t)^{-1} A ) = \mu\qty(A) 
\]
by Lemmas \ref{lem_bkwdsubset} and \ref{lem_bkwdsame}. The rest of the proof proceeds in the same way as the case of a flow.
\end{proof}
\section{Concluding Remarks}\label{section_conclusion}
For an ergodic system, a possible value of the time average is either very common or very rare by Proposition \ref{prop_rare}. A natural question here is: \textit{Given $f \in L^1(\mu)$, where $\mu$ is ergodic, is it possible to determine which value can be realized as the time average almost everywhere?}

Another important question is the relation to the categorical definition of ergodicity considered in \cite{moss2023category}. One can formulate the notion of ergodicity for systems in the Markov category, and therefore we can expect to obtain a version of ergodicity by applying this definition. However, to make this possible in the current setting, we need to formulate a suitable category, which is an interesting question in itself.
\bibliography{ergodic}
\bibliographystyle{plain}

\end{document}